\numberwithin{equation}{section}
\numberwithin{figure}{section}
\numberwithin{table}{section}
\newtheorem{theorem}{Theorem}[section]
\newtheorem{lemma}[theorem]{Lemma}
\newtheorem{remark}[theorem]{Remark}
\newtheorem{definition}[theorem]{Definition}
\newcommand{\balpha}{{\boldsymbol{\alpha}}}
\newcommand{\bbeta}{{\boldsymbol{\beta}}}
\newcommand{\be}{{\boldsymbol{e}}}
\begin{document}

\title{$\mathcal{P}_m$ Interior Penalty Nonconforming Finite Element
Methods for $2m$-th Order PDEs in $\mathbb{R}^n$}
\author{
Shuonan Wu\footnote{snwu@math.pku.edu.cn, School of Mathematical Sciences,
Peking University, China, 100871.}\qquad\qquad 
Jinchao Xu\footnote{xu@math.psu.edu, Department of Mathematics,
Pennsylvania State University, University Park, PA, 16802, USA}
}
\date{}

\maketitle

\begin{abstract}
In general $n$-dimensional simplicial meshes, we propose a family of interior penalty nonconforming finite element methods for $2m$-th order partial differential equations, where $m \geq 0$ and $n \geq 1$. For this family of nonconforming finite elements, the shape function space consists of polynomials with a degree not greater than $m$, making it minimal. This family of finite element spaces exhibits natural inclusion properties, analogous to those in the corresponding Sobolev spaces in the continuous case. By applying interior penalty to the bilinear form, we establish quasi-optimal error estimates in the energy norm. Due to the weak continuity of the nonconforming finite element spaces, the interior penalty terms in the bilinear form take a simple form, and an interesting property is that the penalty parameter needs only to be a positive constant of $\mathcal{O}(1)$. These theoretical results are further validated by numerical tests.
\end{abstract}

%%%%%%%%%%%%%%%%%%%%%%%%%%%%%%%%%%%%%%%%%%%%%%%%%%
%% Introduction 
%%%%%%%%%%%%%%%%%%%%%%%%%%%%%%%%%%%%%%%%%%%%%%%%%%
\section{Introduction}

Let $\Omega$ be a bounded polyhedral domain of $\mathbb{R}^n$. In this paper, we consider the following $2m$-th order partial differential equation:
\begin{equation} \label{equ:m-harmonic}
\left\{
\begin{aligned}
(-\Delta)^m u &= f \quad \mbox{in }\Omega, \\
\frac{\partial^k u}{\partial \nu^k} &= 0 \quad \mbox{on }\partial \Omega, \quad 0 \leq k \leq m-1.
\end{aligned}
\right.
\end{equation}
Here, $f \in L^2(\Omega)$, and $\nu$ denotes the unit outer normal vector on the boundary $\partial \Omega$. The variational formulation of \eqref{equ:m-harmonic} is to find $u \in H_0^m(\Omega)$ such that
$$ 
a(u,v) = (f,v) \quad \forall v \in H_0^m(\Omega),
$$
where 
\begin{equation} \label{equ:bilinear-form}
a(w,v) := (\nabla^m w, \nabla^m v) = \sum_{|\balpha|=m}  \int_\Omega b_{\balpha}
\partial^\balpha w \, \partial^\balpha v \qquad \forall w,v \in H^m(\Omega),
\end{equation}
and $b_{\balpha} = \frac{m!}{\alpha_1! \alpha_2! \cdots \alpha_n!}$. Here and throughout this paper, we use the standard notation for Sobolev spaces as in \cite{ciarlet1978finite, brenner2007mathematical}. For an $n$-dimensional multi-index $\balpha = (\alpha_1, \cdots, \alpha_n)$, we define 
$$ 
|\balpha| = \sum_{i=1}^n \alpha_i, \quad 
\partial^\balpha = \frac{\partial^{|\balpha|}}{\partial x_1^{\alpha_1} \cdots \partial x_n^{\alpha_n}}.
$$

%% Conforming element 
Conforming finite element methods for \eqref{equ:m-harmonic} involve $C^{m-1}$ finite elements, which can lead to extremely complicated constructions when $m \geq 2$ or $n \geq 2$. In \cite{bramble1970triangular}, Bramble and Zl{\'a}mal proposed the 2D simplicial $H^m$ conforming elements ($m \geq 1$) using polynomial spaces of degree $4m-3$, generalizing the $C^1$ Argyris element (cf. \cite{ciarlet1978finite}) and the $C^2$ {\v{Z}}en{\'\i}{\v{s}}ek element (cf. \cite{vzenivsek1970interpolation}). The 3D $C^1$ elements were reported in \cite{zhang2009family, walkington2014c}. Recently, Hu, Lin, and Wu \cite{hu2023construction} developed a construction of conforming elements for $m, n \geq 1$, solving this longstanding problem in the canonical framework. In their construction, the polynomial degree begins at $(m-1)2^n+1$. Further work \cite{hu2024condition} has since shown that this represents the minimal achievable polynomial degree. A geometric interpretation of this key result can be found in \cite{chen2021geometric}. An alternative approach is constructing conforming finite elements on rectangular meshes for arbitrary $m$ and $n$ (see \cite{hu2015minimal}). For \eqref{equ:m-harmonic}, another strategy is to apply a reduced-order mixed discretization (cf. \cite{schedensack2016new, gallistl2017stable}).

%%%% Nonconforming, unified construction when $m=n+1$ 
As a universal construction with respect to the dimension, Wang and Xu \cite{wang2013minimal} proposed a family of nonconforming finite elements for \eqref{equ:m-harmonic} on $\mathbb{R}^n$ simplicial grids, with the condition that $m \leq n$. These minimal finite elements (referred to as Morley-Wang-Xu or MWX elements) are simple and elegant, combining simplicial geometry, polynomial spaces, and convergence analysis. However, removing the restriction $m \leq n$ for the MWX elements remains a formidable challenge. In \cite{wu2017nonconforming}, Wu and Xu enriched the $\mathcal{P}_n$ polynomial space by adding $\mathcal{P}_{n+1}$ bubble functions, thus obtaining a family of $H^m$ nonconforming elements when $m = n+1$. With carefully designed degrees of freedom, they proved the unisolvent property by exploiting the similarities between the shape function spaces and the degrees of freedom. In \cite{hu2016canonical}, Hu and Zhang used the full $\mathcal{P}_4$ polynomial space to construct nonconforming elements for the case $m=3$, $n=2$, which has three more degrees of freedom locally compared to the elements in \cite{wu2017nonconforming}. They also employed the full $\mathcal{P}_{2m-3}$ polynomial space for nonconforming finite element approximations when $m \geq 4$, $n=2$. In the case of $n=3$, \cite{hu2020family} constructed several high-order nonconforming elements for the $H^2$ problem.  

Until recently, Li and Wu in \cite{li2024construction} extended the MWX nonconforming elements to arbitrary $m, n \geq 1$, and their discretized bilinear form is essentially the original nonconforming form---the bilinear form is simply the broken one. In this construction, when $m > n$, additional bubble functions must be carefully introduced to ensure unisolvence. Although, when $m > n$, the local shape function space is larger than $\mathcal{P}_m$ (i.e., the minimal space), to the best of the author's understanding, it is difficult to find a smaller space from the pure perspective of nonconforming finite elements.

In this paper, we propose a family of interior penalty nonconforming finite element methods for the model problem \eqref{equ:m-harmonic}. The finite element used is a nonconforming element with a shape function space of $\mathcal{P}_m$, making it minimal. The degrees of freedom are carefully designed to preserve weak continuity to the greatest extent possible. For the case where $m > n$, suitable interior penalty terms are introduced to guarantee convergence. As a balance between weak continuity and the interior penalty, the proposed methods require fewer interior penalty terms than the $C^0$-IPDG methods in \cite{brenner2005c0, gudi2011interior, chen2022c0}. As a balance between weak continuity and the interior penalty, the proposed methods require fewer interior penalty terms than the $C^0$-IPDG methods in \cite{brenner2005c0, gudi2011interior, chen2022c0}. In fact, our bilinear form does not include the cross terms for compatibility that are usually present in traditional IPDG methods. As a simple example, for the case where $m = 3$ and $n = 2$, the method involves finding $u_h \in V_h$ such that
$$ 
(\nabla_h^3 u_h, \nabla_h^3 v_h) + \eta \sum_{F \in \mathcal{F}_h}
h_F^{-5}\int_F \llbracket u_h \rrbracket \cdot \llbracket v_h
\rrbracket = (f, v_h) \qquad \forall v_h \in V_h, 
$$
where the nonconforming element is illustrated in Figure \ref{fig:m3n2}. Furthermore, Figure \ref{fig:m4n2} shows the nonconforming element for $m=4, n=2$, where the proposed method is written as
$$ 
(\nabla_h^4 u_h, \nabla_h^4 v_h) + \eta \sum_{F \in \mathcal{F}_h}
h_F^{-5}\int_F \sum_{|\balpha|=1} \llbracket \partial^{\balpha} u_h
\rrbracket \cdot \llbracket \partial^{\balpha} v_h \rrbracket = (f,
v_h) \qquad \forall v_h \in V_h. 
$$

The advantages of our methods include that the resulting stiffness matrix is symmetric and positive definite. Most importantly, the penalty parameter $\eta$ only needs to be a positive constant of $\mathcal{O}(1)$. Furthermore, the proposed methods are well-suited for handling more complicated nonlinear high-order models (cf. \cite{du2004phase, barrett2004finite, backofen2007nucleation, wise2009energy, hu2009stable, wang2011energy, dai2013geometric, doelman2014meander}).

\begin{figure}[!htbp]
\centering 
%\captionsetup{justification=centering}
\subfloat[$m=3,n=2$]{\centering 
   \includegraphics[width=0.25\textwidth]{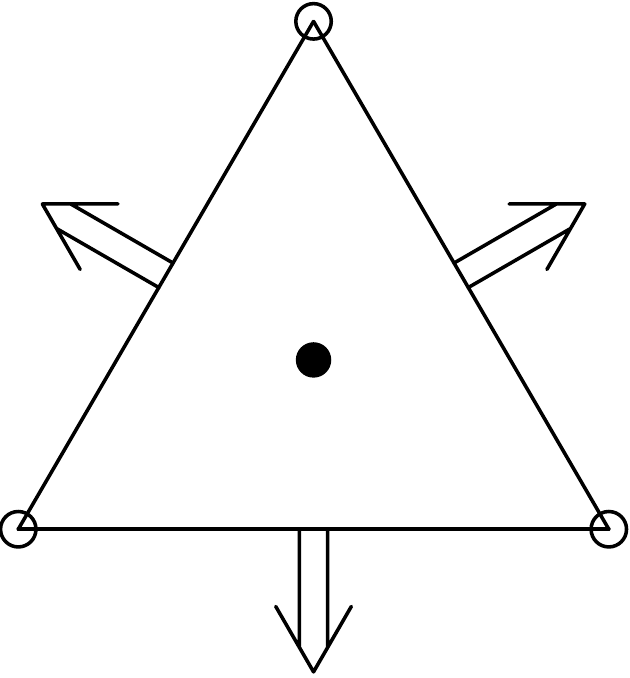} 
   \label{fig:m3n2}
}% 
\qquad\qquad  
\subfloat[$m=4,n=2$]{\centering 
   \includegraphics[width=0.25\textwidth]{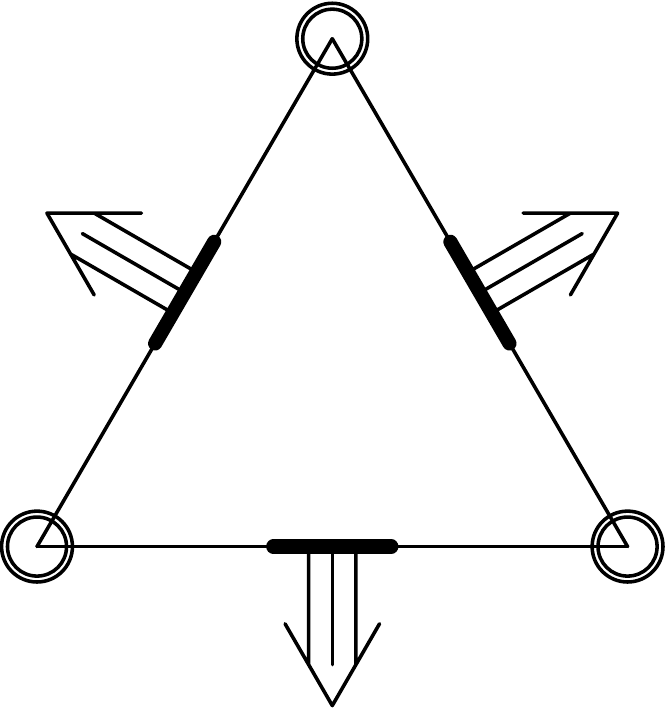} 
   \label{fig:m4n2}
}
\end{figure}

Finally, for virtual element methods (VEM) \cite{beirao2013basic, beirao2014hitchhiker} with local spaces that are not polynomial, there have been many recent advancements in their application to high-order equations. For conforming VEM, we refer to \cite{antonietti2020conforming, da2020c1, chen2022conforming}. As for nonconforming VEM, additional references can be found in \cite{huang2020nonconforming, chen2020nonconforming}.

%% rest of paper
The rest of the paper is organized as follows. In Section
\ref{sec:spaces}, we provide a detailed description of the family of 
nonconforming finite element spaces for arbitrary $m,n$.  In Section
\ref{sec:IP-nonconforming}, we state the interior penalty
nonconforming methods and prove the well-posedness. In Section
\ref{sec:convergence}, we prove the quasi-optimal error estimates of
the interior penalty nonconforming method provided that the conforming
relatives exist.  We further give the error estimates without the
conforming relatives assumption, but under an extra regularity
assumption.  Numerical tests are provided in Section
\ref{sec:numerical} to support the theoretical findings, and some
concluding remarks will then arrive to close the main text.

%%%%%%%%%%%%%%%%%%%%%%%%%%%%%%%%%%%%%%%%%%%%%%%%%% 
%% Section 2
%%%%%%%%%%%%%%%%%%%%%%%%%%%%%%%%%%%%%%%%%%%%%%%%%%
\section{The minimal nonconforming finite element spaces} \label{sec:spaces}

Let $\mathcal{T}_h$ be a conforming and shape-regular simplicial triangulation of $\Omega$, and let $\mathcal{F}_h$ denote the set of all faces in $\mathcal{T}_h$. The set of interior faces is $\mathcal{F}_h^i = \mathcal{F}_h \setminus \partial \Omega$, and the boundary faces are given by $\mathcal{F}_h^{\partial} = \mathcal{F}_h \cap \partial \Omega$. Define $h := \max_{T \in \mathcal{T}_h} h_T$, where $h_T$ is the diameter of $T$ (cf. \cite{ciarlet1978finite, brenner2007mathematical}). We assume that $\mathcal{T}_h$ is quasi-uniform, i.e.,  
\[
\exists \theta > 0 \quad \text{such that} \quad \max_{T \in \mathcal{T}_h} \frac{h}{h_T} \leq \theta,
\]
where $\theta$ is a constant independent of $h$. For any $F \in \mathcal{F}_h$, let $\omega_F$ denote the union of all simplices sharing the face $F$. The diameter of $\omega_F$ is denoted by $h_F$. For any $F \in \mathcal{F}_h^i$, there exist two simplices, $T^+$ and $T^-$, such that $F = \partial T^+ \cap \partial T^-$. The unit normals $\nu^+$ and $\nu^-$ on $F$ are defined to point outward from $T^+$ and $T^-$, respectively. For a function $v$, we set its traces on $F$ as $v^{\pm} = v|_{\partial T^{\pm}}$, and define the following standard DG notation:  
\[
[v] = v^+ - v^-, \quad \{v\} = \frac{1}{2}(v^+ + v^-), \quad \llbracket v\rrbracket = v^+ \nu^+ + v^-\nu^- \quad \text{on } F \in \mathcal{F}_h^i.
\] 
On the boundary faces, $v$ has a uniquely defined trace. Thus, we set  
\[
[v] = v, \quad \{v\} = v, \quad \llbracket v \rrbracket = v\nu \quad \text{on } F \in \mathcal{F}_h^{\partial}.
\]
Here, $\llbracket\cdot\rrbracket$ and $\{\cdot\}$ represent the jump and average operators, respectively, following the standard DG notation (cf. \cite{arnold2002unified}).  

Based on the triangulation $\mathcal{T}_h$, for $v \in L^2(\Omega)$ with $v|_T \in H^k(T)$ for all $T \in \mathcal{T}_h$, we define $\partial^{\balpha}_h v$ as the piecewise partial derivatives of $v$ for multi-indices $\balpha$ with $|\balpha| \leq k$. The broken norms and seminorms are then given by:  
\[
\|v\|_{k,h}^2 := \sum_{T \in \mathcal{T}_h}\|v\|_{k,T}^2, \quad
|v|_{k,h}^2 := \sum_{T \in \mathcal{T}_h} |v|_{k,T}^2.
\] 

For convenience, we use $C$ to denote a generic positive constant that may represent different values in different occurrences but is independent of the mesh size $h$. The notation $X \lesssim Y$ means $X \leq CY$.

\subsection{Definition of nonconforming finite elements}
%For any $m \geq 0$ and $n \geq 1$, we define $L =\lfloor \frac{m}{n+1}
%\rfloor$, where $\lfloor x \rfloor$ is the greatest integer that
%is less than or equal to $x$.  Following the description of
%\cite{ciarlet1978finite, brenner2007mathematical}, a finite element
%can be represented by a triple $(T, P_T, D_T)$, where $T$ is the
%geometric shape of the element, $P_T$ is the shape function space, and
%$D_T$ is the set of the degrees of freedom that is $P_T$-unisolvent.
%The minimal shape function space is defined as $P_T^{(m,n)} :=
%\mathcal{P}_m(T)$, where $\mathcal{P}_k(T)$ denotes the space of all
%polynomials defined on $T$ with a degree not greater than $k$, for any
%integer $k \geq 0$.
%
%For $0\leq k \leq n$, let $\mathcal{F}_{T,k}$ be the set consisting of
%all $(n-k)$-dimensional sub-simplexes of $T$ (Hence, $k$ represents
%the co-dimension). For any $F\in \mathcal{F}_{T,k}~(1\leq k \leq n)$,
%let $|F|$ denote its $(n-k)$-dimensional measure, and $\nu_{F,1},
%\ldots, \nu_{F,k}$ be linearly independent unit vectors that are
%orthogonal to the tangent space of $F$. Specifically, $F$ represents a
%vertex and $|F|=1$ when $k=n$. 

For any $m \geq 0$ and $n \geq 1$, we define $L = \lfloor \frac{m}{n+1} \rfloor$, where $\lfloor x \rfloor$ denotes the greatest integer less than or equal to $x$. Following the description in \cite{ciarlet1978finite, brenner2007mathematical}, a finite element is represented by a triple $(T, P_T, D_T)$, where $T$ denotes the geometric shape of the element, $P_T$ is the shape function space, and $D_T$ is the set of degrees of freedom, which is $P_T$-unisolvent. For the $2m$-th order elliptic problem \eqref{equ:m-harmonic}, the minimal shape function space is $P_T^{(m,n)} := \mathcal{P}_m(T)$, where $\mathcal{P}_k(T)$ represents the space of all polynomials on $T$ with degree not exceeding $k$, for any integer $k \geq 0$.

For $0 \leq k \leq n$, let $\mathcal{F}_{T,k}$ denote the set of all $(n-k)$-dimensional sub-simplices of $T$ (thus, $k$ represents the co-dimension). For any $F \in \mathcal{F}_{T,k}$ $(1 \leq k \leq n)$, let $|F|$ denote its $(n-k)$-dimensional measure, and let $\nu_{F,1}, \ldots, \nu_{F,k}$ be linearly independent unit vectors orthogonal to the tangent space of $F$. Specifically, when $k = n$, $F$ represents a vertex and $|F| = 1$.  

For $1\leq k \leq n$, let $A_k$ be the set consisting of all multi-indexes $\balpha$ with $\sum_{i=k+1}^n \alpha_i = 0$. For any $(n-k)$-dimensional sub-simplex $F\in \mathcal{F}_{T,k}$ and $\balpha \in A_k$ with 
$$ 
|\balpha| = m-k-(n+1)(L-\ell), \qquad 0\leq \ell \leq L, 
$$
define 
\begin{equation} \label{equ:DOF-subsimplex}
\begin{aligned}
d_{T,F,\balpha}(v) &:= \frac{1}{|F|} \int_F
\frac{\partial^{|\balpha|}v}{\partial \nu_{F,1}^{\alpha_1} \cdots
\partial \nu_{F,k}^{\alpha_k}} 
\qquad \forall v\in H^m(T),\\
d_{T,0}(v) &:= \frac{1}{|T|} \int_T v \qquad\qquad\qquad\quad ~~\forall
v\in H^m(T).
\end{aligned}
\end{equation}
Here, $\ell$ represents the level of the degrees of freedom. By the Sobolev embedding theorem (cf. \cite{adams2003sobolev}), $d_{T,F,\balpha}$ is a continuous linear functional on $H^{m}(T)$. We define the set of the degrees of freedom at level $\ell$ as 
\begin{equation} \label{equ:DOFs-l}
\begin{aligned}
\tilde{D}_{T,\ell}^{(m,n)} &:=  \Big\{ d_{T,F,\balpha}~|~ \balpha \in A_k
\text{ with } |\balpha| = m-k-(n+1)(L-\ell),\\
& \qquad \qquad F\in \mathcal{F}_{T,k}, 1 \leq k \leq \min\{n,
    m-(n+1)(L-\ell)\} \Big\}, \qquad \text{for } 0 \leq \ell \leq L.
\end{aligned}
\end{equation}

We further define the set of degrees of freedom at level $\ell=-1$ as 
\begin{equation} \label{equ:DOFs--1}
\tilde{D}_{T,-1}^{(m,n)} := 
\left\{
\begin{aligned}
& \{d_{T,0}\} \qquad\qquad \text{if } m
\equiv 0~(\bmod ~n+1), \\
& \varnothing \qquad\qquad\qquad \text{otherwise}.
\end{aligned}
\right.
\end{equation}
Then, the set of the degrees of freedom is 
\begin{equation} \label{equ:DOFs}
D_T^{(m,n)}= \bigcup_{\ell=-1}^L \tilde{D}_{T,\ell}^{(m,n)}. 
\end{equation}
The diagrams of the finite elements ($0 \leq m \leq 5, 1\leq n \leq 3$) are plotted in Table \ref{tab:period-table}.

\begin{table}[!htbp]
\begin{center}
\begin{tabular}{>{\centering\arraybackslash}m{.6cm} |
>{\centering\arraybackslash}m{4.0cm} |
>{\centering\arraybackslash}m{4.0cm} |
>{\centering\arraybackslash}m{4.0cm} @{}m{0pt}@{} }
\hline
$m \backslash n$ & 1 & 2 & 3 \\ \hline
0 &
\includegraphics[width=1.3in]{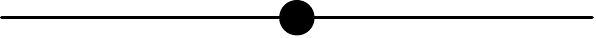}
& 
\includegraphics[width=1.3in]{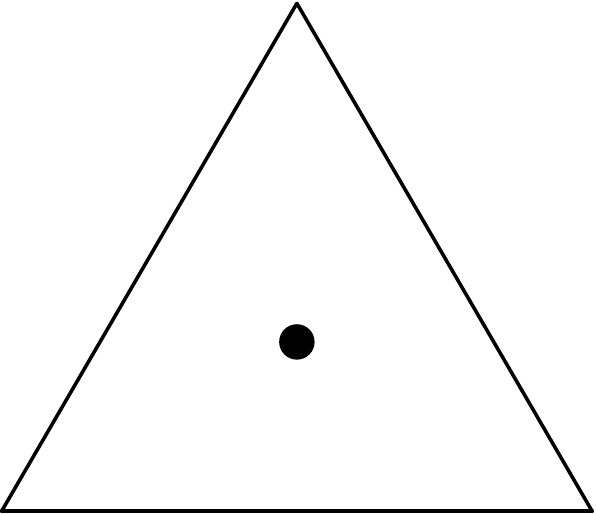}
& 
\includegraphics[width=1.3in]{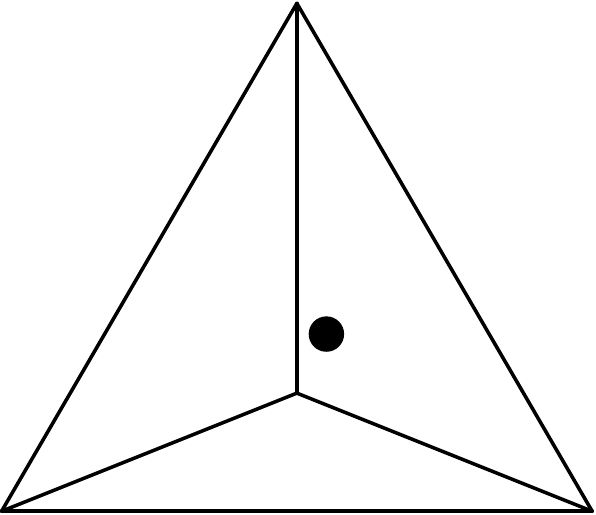} \\ 
\hline 
1 &
\includegraphics[width=1.3in]{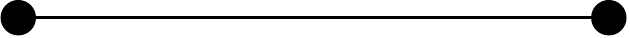}
& 
\includegraphics[width=1.3in]{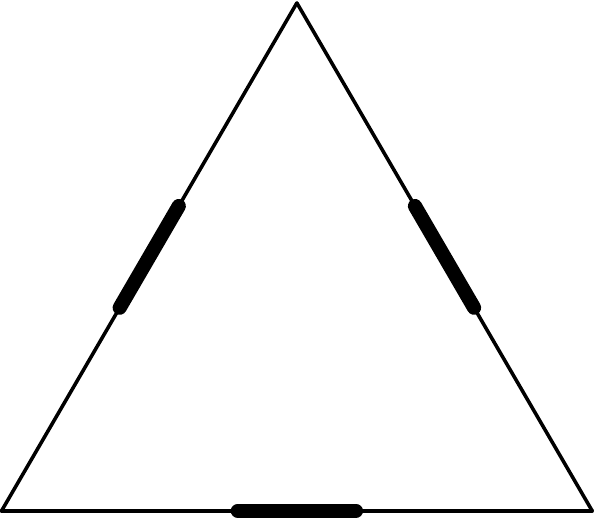}
& 
\includegraphics[width=1.3in]{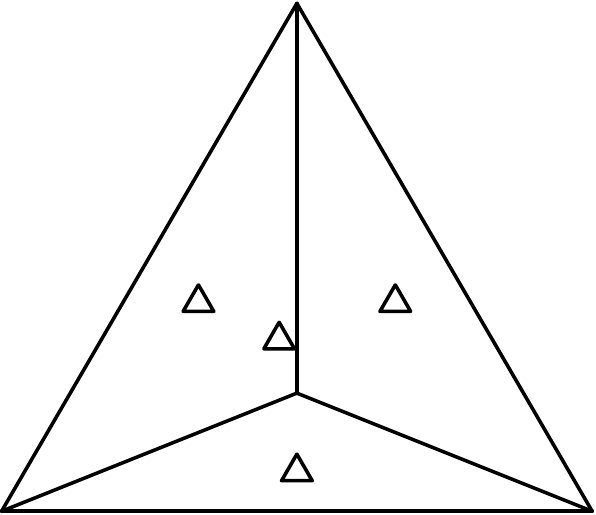} \\ 
\hline 
2 & 
\includegraphics[width=1.3in]{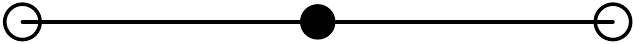}
& 
\includegraphics[width=1.3in]{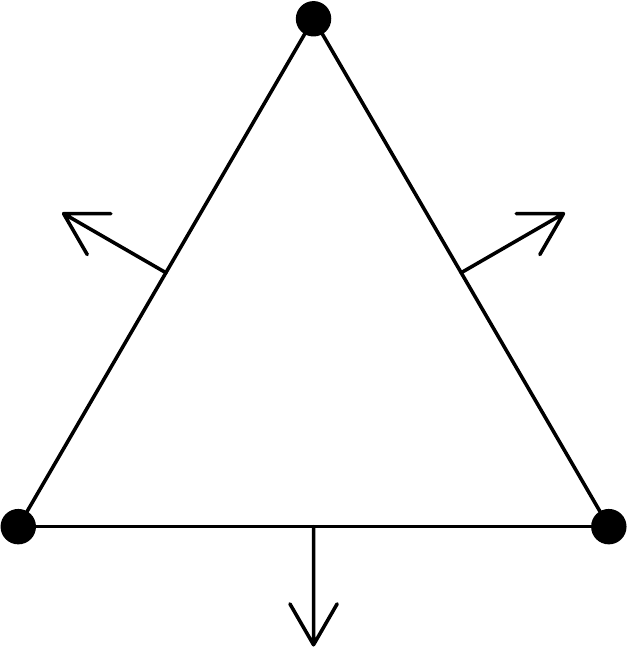}
& 
\includegraphics[width=1.3in]{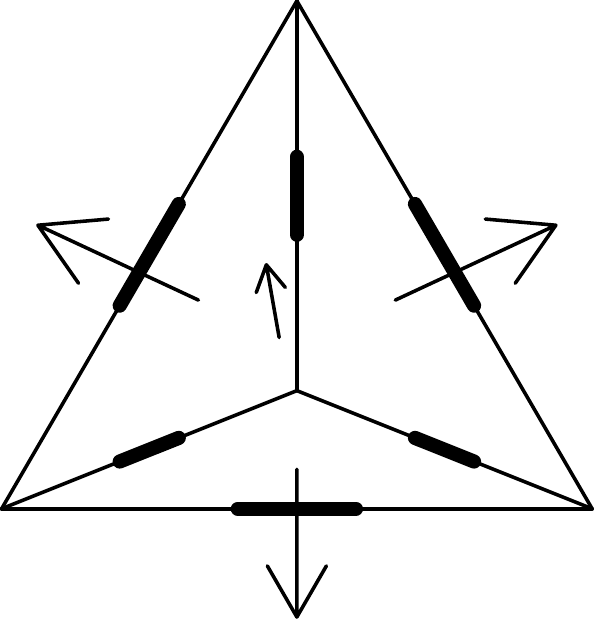}
\\ \hline 
3 & 
\includegraphics[width=1.3in]{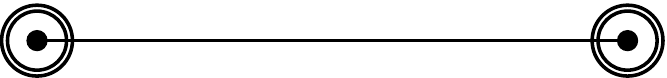}
&
\includegraphics[width=1.3in]{MWX2Dm3-IP.pdf}
&
\includegraphics[width=1.3in]{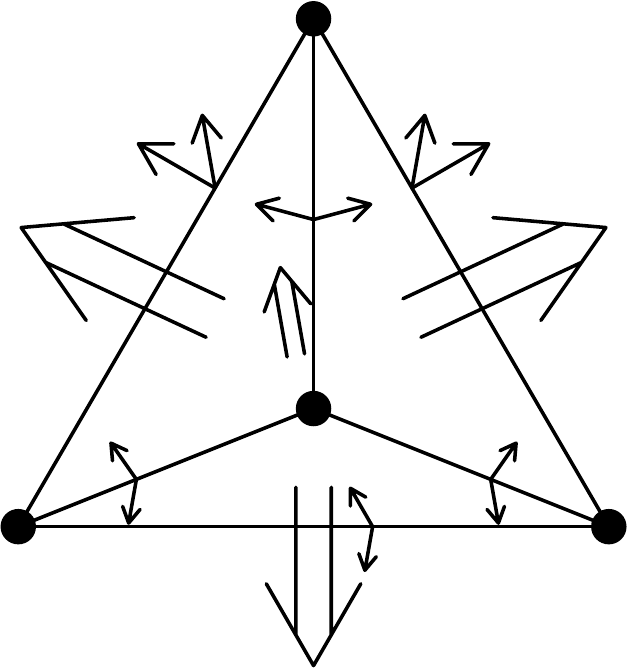}
\\ \hline
4 & 
\includegraphics[width=1.3in]{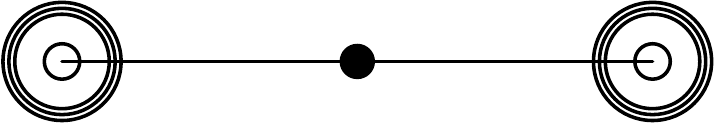}
&
\includegraphics[width=1.3in]{MWX2Dm4-IP.pdf}
&
\includegraphics[width=1.3in]{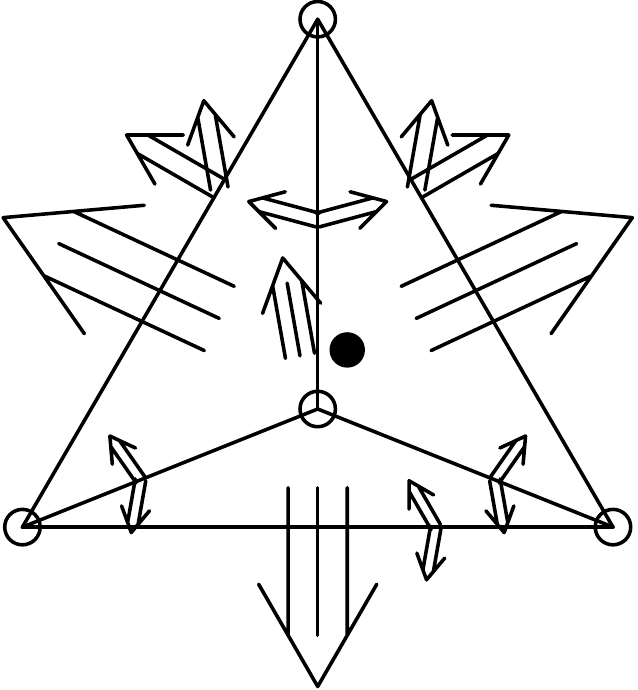}
\\ \hline 
5 & 
\includegraphics[width=1.3in]{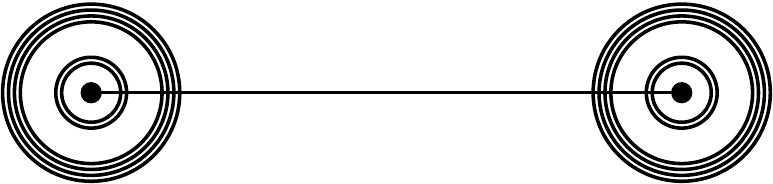}
&
\includegraphics[width=1.3in]{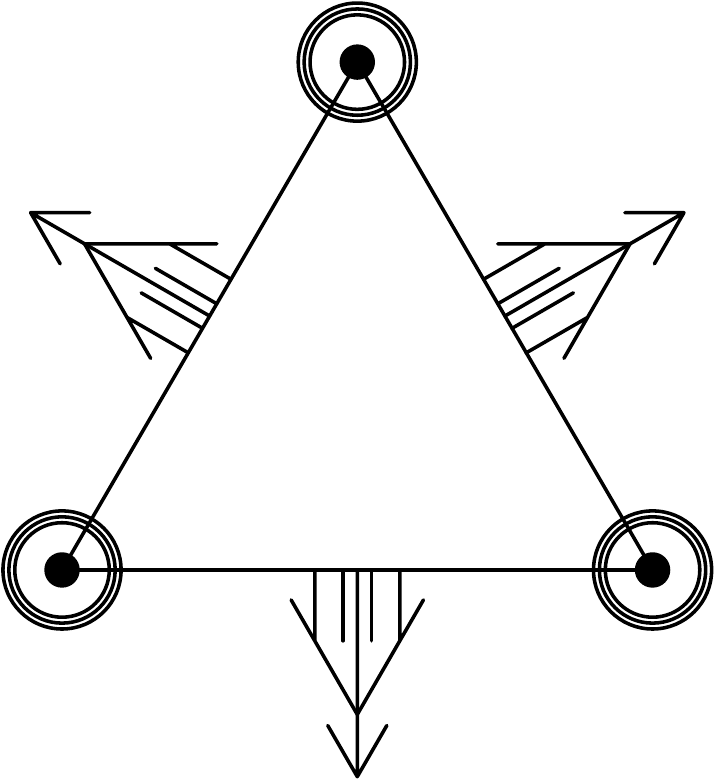}
&
\includegraphics[width=1.3in]{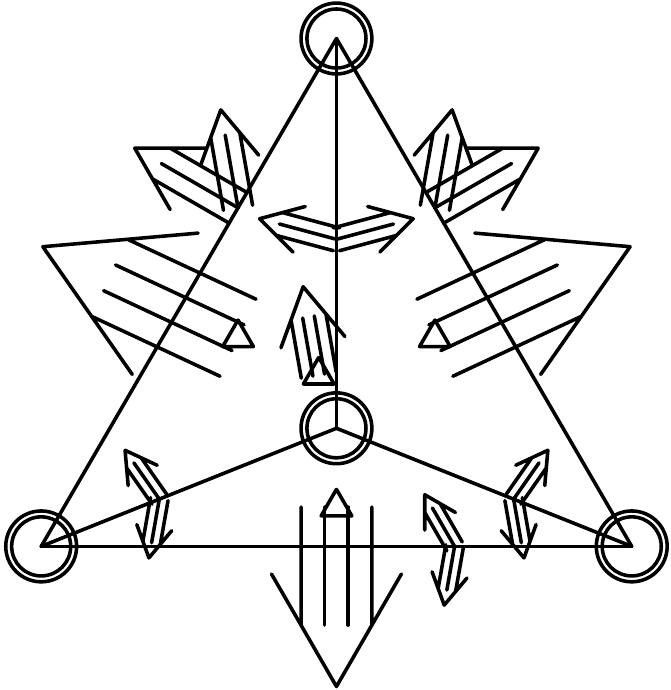}
\\ \hline 
\end{tabular}
\caption{Degrees of freedom: $0 \leq m \leq 5$, $1 \leq n \leq 3$}
\label{tab:period-table}
\end{center}
\end{table}

\begin{remark}[extension of the Morley-Wang-Xu elements]
It can be observed that when $1 \leq m \leq n$, or equivalently when $L = 0$, the proposed finite elements reduce to the Morley-Wang-Xu elements as introduced in \cite{wang2013minimal}.
\end{remark}

We number the local degrees of freedom by 
$$ 
d_{T,1}, d_{T,2}, \cdots, d_{T,J^{(m,n)}},
$$ 
where $J^{(m,n)}$ is the number of local degrees of freedom. 

\begin{lemma}[number of degrees of freedom]
For any $m\geq 0, n\geq 1$, $J^{(m,n)} = {\rm dim} P_T^{(m,n)}$.
\end{lemma}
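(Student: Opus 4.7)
My plan is to compute both sides explicitly and match them via a Vandermonde convolution followed by a telescoping sum. The right-hand side is the classical $\dim \mathcal{P}_m(T) = \binom{m+n}{n}$, so the work is entirely on the left-hand side. Writing $r := m - (n+1)L \in \{0,1,\ldots,n\}$ and $s_l := m - (n+1)(L-l) = r + l(n+1)$, I would organize the count of the degrees of freedom level by level as laid out in \eqref{equ:DOFs-l}--\eqref{equ:DOFs--1}. Using that an $n$-simplex has $\binom{n+1}{k}$ sub-simplexes of co-dimension $k$, and that the set $\{\balpha \in A_k : |\balpha| = s_l - k\}$ is in bijection with non-negative integer compositions of $s_l - k$ of length $k$, and hence has cardinality $\binom{s_l - 1}{k-1}$, the level-$l$ contribution for $l \ge 0$ is
\[
N(s_l) \;=\; \sum_{k=1}^{\min\{n,\,s_l\}} \binom{n+1}{k}\binom{s_l-1}{k-1},
\]
while level $-1$ contributes $1$ when $r = 0$ and $0$ otherwise.

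Next I would apply Vandermonde's identity $\sum_{k \ge 1}\binom{n+1}{k}\binom{s-1}{k-1} = \binom{n+s}{s}$, compensating for the truncated range: our sum stops at $k = n$ rather than $k = n+1$, so the missing $k = n+1$ term $\binom{s-1}{n}$ must be subtracted whenever $s \ge n+1$. This yields $N(s_l) = \binom{n+s_l}{n} - \binom{s_l - 1}{n}\,\mathbf{1}_{\{s_l \ge n+1\}}$. The pivotal algebraic fact is then $s_l - 1 = n + s_{l-1}$, which follows from $s_l - s_{l-1} = n+1$; consequently $\binom{s_l - 1}{n} = \binom{n + s_{l-1}}{n}$ for $l \ge 1$, and the negative term at level $l$ cancels the positive term at level $l-1$. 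Summing over $0 \le l \le L$ telescopes down to $\binom{n + s_L}{n} = \binom{m+n}{n}$, modulo boundary effects at the bottom of the telescope.

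The step I expect to require the most care is exactly that boundary bookkeeping at $l = 0$, which is where the definition of $\tilde{D}_{T,-1}^{(m,n)}$ in \eqref{equ:DOFs--1} comes in. If $r \ge 1$, then $s_0 = r \le n$, so no correction is subtracted at level $0$, the telescope closes cleanly with the residual term $\binom{n+r}{n}$ already accounted for, and no level-$(-1)$ functional is needed, matching $\tilde{D}_{T,-1}^{(m,n)} = \varnothing$. If $r = 0$, then $s_0 = 0$ gives $N(0) = 0$, but the telescope leaves an uncancelled $-\binom{n}{n} = -1$ emerging from the $l = 1$ correction, which is supplied back exactly by the single functional $d_{T,0} \in \tilde{D}_{T,-1}^{(m,n)}$. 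Once this case split is tracked carefully, and the identity $s_l - 1 = n + s_{l-1}$ is verified, the remainder is routine binomial algebra with no further obstacles.
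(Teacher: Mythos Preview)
Your proposal is correct and follows essentially the same approach as the paper: both count level by level using $\binom{n+1}{k}\binom{s_l-1}{k-1}$, apply Vandermonde to reach $\binom{n+s_l}{n}$ minus a correction term, observe that $s_l-1=n+s_{l-1}$ makes the corrections telescope, and then treat the bottom level via the case split $r=0$ versus $r\ge 1$ to match the definition of $\tilde{D}_{T,-1}^{(m,n)}$. Your notation $r,s_l$ is a bit cleaner than the paper's raw $m-(n+1)(L-l)$ expressions, but the argument is the same.
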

\begin{proof}
Let the combinatorial number $C_j^i = \frac{j!}{i!(j-i)!}$ for $j \geq i$, and $C_j^i = 0$ for $j < i$ or $j < 0$. For each $0 \leq k \leq n$, $T$ has $C_{n+1}^{k}$ sub-simplices of dimension $n-k$. For each $(n-k)$-dimensional sub-simplex $F$, the number of all $(m-k-(n+1)(L-\ell))$-th order directional derivatives with respect to $\nu_{F,1}, \ldots, \nu_{F,k}$ is $C_{m-(n+1)(L-\ell)-1}^{k-1} = C_{m-(n+1)(L-\ell)-1}^{m-(n+1)(L-\ell)-k}$. Using the fact that $0 \leq m-(n+1)\lfloor \frac{m}{n+1} \rfloor \leq n$, we have
$$
\begin{aligned}
J^{(m,n)} &= \left| \tilde{D}_{T,-1}^{(m,n)} \right| + \sum_{\ell=0}^L 
\left|\tilde{D}_{T,\ell}^{(m,n)} \right| \\
&= \left| \tilde{D}_{T,-1}^{(m,n)} \right| + \sum_{\ell=0}^L 
\sum_{k=1}^{\min\{n,m-(n+1)(L-\ell)\}} C_{n+1}^k
C_{m-(n+1)(L-\ell)-1}^{m-(n+1)(L-\ell)-k} \\
&= \left| \tilde{D}_{T,-1}^{(m,n)} \right| + \sum_{\ell=0}^L 
\sum_{k=-\infty}^{\min\{n,m-(n+1)(L-\ell)\}} C_{n+1}^k
C_{m-(n+1)(L-\ell)-1}^{m-(n+1)(L-\ell)-k} \\
&= \left| \tilde{D}_{T,-1}^{(m,n)} \right| 
+ \sum_{k=-\infty}^{m-(n+1)L} C_{n+1}^k
C_{m-(n+1)L-1}^{m-(n+1)L-k}   
+ \sum_{\ell=1}^L \sum_{k=-\infty}^{n} C_{n+1}^k
C_{m-(n+1)(L-\ell)-1}^{m-(n+1)(L-\ell)-k} \\ 
&= \left| \tilde{D}_{T,-1}^{(m,n)} \right| 
+ \sum_{k=-\infty}^{m-(n+1)L} C_{n+1}^k C_{m-(n+1)L-1}^{m-(n+1)L-k}   
+ \sum_{\ell=1}^L \left[ C_{m-(n+1)(L-1-\ell)-1}^{m-(n+1)(L-\ell)} -
C_{m-(n+1)(L-\ell)-1}^{m-(n+1)(L+1-\ell)} \right] \\ 
&= \left| \tilde{D}_{T,-1}^{(m,n)} \right| + \left(
\sum_{k=-\infty}^{m-(n+1)L} C_{n+1}^k
C_{m-(n+1)L-1}^{m-(n+1)L-k} \right) +  
C_{m+n}^n - C_{m-(n+1)(L-1)-1}^n.
\end{aligned}
$$ 

Note that $m-(n+1)L = 0$ when $m \equiv 0~(\bmod ~n+1)$. Hence, 
$$ 
\sum_{k=-\infty}^{m-(n+1)L} C_{n+1}^k
C_{m-(n+1)L-1}^{m-(n+1)L-k} = \left\{
\begin{aligned}
&0 \qquad\qquad\qquad\qquad\quad\text{if } m\equiv 0 ~ (\bmod ~n+1), \\
&C_{m-(n+1)(L-1)-1}^n \qquad~ \text{otherwise}.
\end{aligned}
\right. 
$$ 
In conjunction with the definition of $\tilde{D}_{T,-1}^{(m,n)}$
in \eqref{equ:DOFs--1}, we conclude that 
$$ 
J^{(m,n)} = C_{m+n}^n \qquad \forall m \geq 0, n\geq 1,
$$ 
which is exactly the dimension of $P_T^{(m,n)}$.
\end{proof}

\subsection{Unisolvent property}
We shall show the $P_T$-unisolvent property of the new family of finite elements.

\begin{lemma}[integrals of function derivatives on sub-simplices] \label{lem:sub-simplex-derivatives}  
For any $0 \leq \ell \leq L$, let $0 \leq k \leq \min\{n, m-(n+1)(L-\ell)\}$ and $F \in \mathcal{F}_{T,k}$. Then, for any $v \in H^m(T)$, the integrals of all its $(m-k-(n+1)(L-\ell))$-th order derivatives on $F$, i.e.,  
$$  
\int_F \partial^{\balpha}v, \qquad |\balpha| = m-k-(n+1)(L-\ell),  
$$  
are determined by $\tilde{D}_{T,\ell}^{(m,n)}$ given in \eqref{equ:DOFs-l}. In other words, if all the degrees of freedom in $\tilde{D}_{T,\ell}^{(m,n)}$ are zero, then the above integrals vanish.  
\end{lemma}  

\begin{proof}
we prove the lemma by induction. When $k = \min\{n, m-(n+1)(L-\ell)\}$, we consider the following two cases:
\begin{enumerate}
\item If $k = n$, namely $\mathcal{F}_{T,k} =
\mathcal{F}_{T,n}$ will be the set of vertices. Therefore,
$\{\nu_{F,1}, \ldots, \nu_{F,n}\}$ forms a set of basis of
$\mathbb{R}^n$. The lemma holds trivially since the $|\balpha|$-th order derivatives at the vertices are included in $\tilde{D}_{T,\ell}^{(m,n)}$.

\item If $k = m-(n+1)(L-\ell) \leq n$, namely $\ell=0$. In this case, we have $|\balpha| = 0$,
$$ 
\frac{1}{|F|}\int_F v = d_{T,F,0}(v). 
$$ 
The lemma is also true.
\end{enumerate}

Assume that the lemma holds for all $i+1 \leq k \leq \min\{n, m-(n+1)(L-\ell)\}$.  
Now, we consider the case where $k = i$. Denote by $S_1, S_2, \ldots, S_{n-k-1}$ all $(n-k-1)$-dimensional sub-simplices of the $(n-k)$-simplex $F$, and let $\nu^{(j)}$ denote the unit outer normal vector of $S_j$, viewed as part of the boundary of the $(n-k)$-simplex in $(n-k)$-dimensional space. Choose orthogonal unit vectors $\tau_{F,k+1}, \ldots, \tau_{F,n}$ that are tangent to $F$. Then, the set  
$\{\nu_{F,1}, \ldots, \nu_{F,k}, \tau_{F,k+1}, \ldots, \tau_{F,n}\}$ forms a basis of $\mathbb{R}^n$.

For any $|\balpha| = m-k-(n+1)(L-\ell)$, if $\alpha_{k+1} = \cdots = \alpha_n = 0$, then
$$
\frac{1}{|F|} \int_F \frac{\partial^{|\balpha|}v}{\partial
\nu_{F,1}^{\alpha_1} \cdots \partial \nu_{F,k}^{\alpha_k} \partial
\tau_{F,k+1}^{\alpha_{k+1}} \cdots \tau_{F,n}^{\alpha_n}} = 
d_{T,F,|\balpha|}(v).
$$
Otherwise, without loss of generality, assume that $\alpha_{k+1} > 0$. By applying Green's formula, we obtain
$$
\int_F \frac{\partial^{|\balpha|}v}{\partial
\nu_{F,1}^{\alpha_1} \cdots \partial \nu_{F,k}^{\alpha_k} \partial
\tau_{F,k+1}^{\alpha_{k+1}} \cdots \tau_{F,n}^{\alpha_n}} 
= \sum_{j=1}^{n-k+1} \nu^{(j)} \cdot \tau_{F,k+1} \int_{S_j} 
\frac{\partial^{|\balpha|-1}v}{\partial
\nu_{F,1}^{\alpha_1} \cdots \partial \nu_{F,k}^{\alpha_k} \partial
\tau_{F,k+1}^{\alpha_{k+1}-1} \cdots \tau_{F,n}^{\alpha_n}},
$$
where the integrals over the sub-simplices $S_j$ can be determined by $\tilde{D}_{T,\ell}^{(m,n)}$ by the induction hypothesis. Thus, the lemma holds for $k=i$. This completes the induction argument.
\end{proof}

\begin{lemma}[inclusion property] \label{lem:MWX-derivatives}
For any $v \in P_T^{(m,n)}$ having all the degrees of freedom zero,
and $|\bbeta| \leq m$, we have 
\begin{equation} \label{equ:derivatives-DOF}
d_T(\partial^{\bbeta} v) = 0
\qquad \forall d_T \in D_T^{(m-|\bbeta|,n)}. 
\end{equation}
\end{lemma}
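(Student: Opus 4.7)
The plan is to fix $\bbeta$ with $|\bbeta|=s\leq m$, note that $\partial^\bbeta v\in\mathcal{P}_{m-s}(T)=P_T^{(m-s,n)}$, and verify that every member of $D_T^{(m-s,n)}$ vanishes on $\partial^\bbeta v$. Write $L=\lfloor m/(n+1)\rfloor$ and $L'=\lfloor(m-s)/(n+1)\rfloor$, noting $L'\leq L$. The guiding idea is that every such DOF, after $\partial^\bbeta$ is absorbed into the integrand, becomes (a linear combination of) integrals over a sub-simplex of $T$ of an order-$(|\balpha|+s)$ derivative of $v$, and a suitable index shift brings us within the scope of Lemma \ref{lem:sub-simplex-derivatives} for the original pair $(m,n)$.

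For a sub-simplex DOF $d_T=d_{T,F,\balpha}\in\tilde D_{T,l}^{(m-s,n)}$ with $F\in\mathcal{F}_{T,k}$, $0\leq l\leq L'$, and $|\balpha|=(m-s)-k-(n+1)(L'-l)$, I would expand $\partial^\bbeta$ in the basis $\{\nu_{F,1},\dots,\nu_{F,k},\tau_{F,k+1},\dots,\tau_{F,n}\}$ used in the proof of Lemma \ref{lem:sub-simplex-derivatives}. Then $\partial^\balpha\partial^\bbeta v$ is a linear combination of order-$(|\balpha|+s)$ directional derivatives of $v$ in that basis. Setting $l'':=l+L-L'$, one checks $l''\in[0,L]$ and $|\balpha|+s=m-k-(n+1)(L-l'')$, while the inequality $k\leq\min\{n,m-s-(n+1)(L'-l)\}$ from $D_T^{(m-s,n)}$ forces $k\leq\min\{n,m-(n+1)(L-l'')\}$ since the right-hand bound only grows by $s\geq0$. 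Lemma \ref{lem:sub-simplex-derivatives} applied at level $l''$ for the pair $(m,n)$ then guarantees that every such face integral is determined by $\tilde D_{T,l''}^{(m,n)}\subseteq D_T^{(m,n)}$, and hence vanishes by hypothesis; thus $d_{T,F,\balpha}(\partial^\bbeta v)=0$.

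For the volume DOF $d_{T,0}\in\tilde D_{T,-1}^{(m-s,n)}$, which is present only when $m-s\equiv0\pmod{n+1}$: if $s=0$ it is immediate because $d_{T,0}\in D_T^{(m,n)}$. If $s\geq1$, pick $i$ with $\beta_i\geq1$, write $\partial^\bbeta v=\partial_i(\partial^{\bbeta-\be_i}v)$, and use the divergence theorem to obtain
$$\int_T\partial^\bbeta v=\sum_{F\in\mathcal{F}_{T,1}}\nu^T_i\big|_F\int_F\partial^{\bbeta-\be_i}v,$$
a finite sum of $(n-1)$-face integrals of an order-$(s-1)$ derivative of $v$. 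With $l'':=L-L'\geq0$ and $k=1$, the identities $m-1-(n+1)(L-l'')=s-1$ and $k=1\leq\min\{n,s\}$ let me invoke Lemma \ref{lem:sub-simplex-derivatives} once more to conclude that each face integral is determined by $\tilde D_{T,L-L'}^{(m,n)}$, hence vanishes.

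The only non-formal part of the argument is the bookkeeping of level shifts: one must verify that $l''$ always lands in the admissible range $[0,L]$, that the order identity for $|\balpha|+s$ (respectively $s-1$) matches what Lemma \ref{lem:sub-simplex-derivatives} requires at level $l''$, and that the $k$-admissibility bound at level $l''$ for $(m,n)$ is inherited from the one at level $l$ for $(m-s,n)$. Once these index checks are in place, Lemma \ref{lem:sub-simplex-derivatives} does all the analytic work.
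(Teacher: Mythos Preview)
Your argument is correct and rests on the same key ingredient as the paper's proof, namely Lemma~\ref{lem:sub-simplex-derivatives}, together with the divergence theorem for the single volume DOF. The organization differs: the paper proceeds by induction on $|\bbeta|$, reducing to the case $|\bbeta|=1$ and then splitting into two cases according to whether $m\equiv 0\pmod{n+1}$ (which governs whether the floor $\lfloor\cdot/(n+1)\rfloor$ drops when passing from $m$ to $m-1$). Your direct approach handles arbitrary $s=|\bbeta|$ in one stroke by the explicit level shift $l''=l+(L-L')$, which neatly absorbs the paper's case analysis into a single arithmetic identity. The inductive version has the virtue of a very concrete base step; your version is more economical and makes the structural reason (a uniform shift of levels by $L-L'$) transparent. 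One minor simplification in your write-up: you need not expand $\partial^\bbeta$ in the local frame $\{\nu_{F,i},\tau_{F,j}\}$, since $d_{T,F,\balpha}(\partial^\bbeta v)$ is already a face integral of an order-$(|\balpha|+s)$ derivative of $v$, to which Lemma~\ref{lem:sub-simplex-derivatives} applies directly.
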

\begin{proof}
By induction, it suffices to prove the case when $|\bbeta| = 1$. We divide the proof into two cases:

\begin{enumerate}
\item If $m \not\equiv 0~(\bmod ~n+1)$, then $L = \lfloor \frac{m}{n+1} \rfloor = \lfloor \frac{m-1}{n+1} \rfloor$. For any $d_{T,F,\balpha} \in \tilde{D}_{T,\ell}^{(m-1,n)}$ (see \eqref{equ:DOFs-l}) with $0 \leq \ell \leq L$, $F \in \mathcal{F}_{T,k}$, and $|\balpha| = m-1-k-(n+1)(L-\ell)$, we have 
$$
d_{T,F,\balpha} (\partial^{\bbeta}v) = \frac{1}{|F|} \int_F \partial^{\bbeta} \frac{\partial^{|\balpha|}v}{\partial \nu_{F,1}^{\alpha_1} \cdots \partial \nu_{F,k}^{\alpha_k}}.
$$ 
Notice that $|\balpha| + |\bbeta| = m-k-(n+1)(L-\ell)$, and thus $d_{T,F,\balpha}(\partial^{\bbeta}v) = 0$ by Lemma \ref{lem:sub-simplex-derivatives}. If further $m = (n+1)L + 1$ so that $\tilde{D}_{T,-1}^{(m-1,n)}$ defined in \eqref{equ:DOFs--1} is non-empty, by setting $\ell = L$ and $k=0$ in Lemma \ref{lem:sub-simplex-derivatives}, we obtain $d_{T,0}(\partial^{\bbeta}v) = \frac{1}{|T|} \int_T \partial^{\bbeta}v = 0$, since $\tilde{D}^{(m,n)}_{T,L}$ vanishes.

\item If $m \equiv 0~(\bmod ~n+1)$, then $\lfloor \frac{m-1}{n+1} \rfloor = \lfloor \frac{m}{n+1} \rfloor - 1 = L-1$. For any $d_{T,F,\balpha} \in \tilde{D}^{(m-1,n)}_{T,\ell}$ with $0 \leq \ell \leq L-1$, $F \in A_k$, and $|\balpha| = m-1-k-(n+1)\left(\lfloor \frac{m-1}{n+1} \rfloor - \ell\right)$, we have
$$
|\balpha| + |\bbeta| = m-k-(n+1)[L-(\ell+1)],
$$
which implies that $d_{T,F,\balpha}(\partial^{\bbeta}v) = 0$ since all the degrees of freedom in $\tilde{D}_{T,\ell+1}^{(m,n)}$ vanish.
\end{enumerate}
This completes the proof.
\end{proof}

Thanks to the above two lemmas, we can prove the unisolvent property of
the new family of nonconforming finite elements. 

\begin{theorem}[unisolvence]\label{thm:unisolvent}
For any $n \geq 1$, $m \geq 0$, $D_T^{(m,n)}$ is
$P_T^{(m,n)}$-unisolvent. 
\end{theorem}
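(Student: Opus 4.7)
The plan is to reduce the unisolvence claim to a short induction on $m$, leveraging the two preceding lemmas. Since the counting lemma already establishes $|D_T^{(m,n)}| = \dim P_T^{(m,n)}$, it suffices to show injectivity: any $v \in \mathcal{P}_m(T)$ annihilating every $d_T \in D_T^{(m,n)}$ must vanish identically.

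For the base case $m=0$, the space $P_T^{(0,n)}$ consists only of constants, and $\tilde{D}_{T,-1}^{(0,n)} = \{d_{T,0}\}$ reads off that constant directly. For the inductive step, I would assume the statement for order $m-1$ and let $v \in \mathcal{P}_m(T)$ kill every element of $D_T^{(m,n)}$. Applying Lemma \ref{lem:MWX-derivatives} with each multi-index of size $|\bbeta| = 1$ yields that $\partial^{\bbeta} v \in \mathcal{P}_{m-1}(T)$ annihilates the whole of $D_T^{(m-1,n)}$; the induction hypothesis then forces $\partial^{\bbeta} v \equiv 0$ for every first partial, so $v$ must be a constant $c$.

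To conclude $c = 0$, I would exhibit one specific degree of freedom in $D_T^{(m,n)}$ that extracts the value of the constant. Setting $k_\star := m - (n+1)L \in \{0,1,\ldots,n\}$, if $k_\star = 0$ (i.e., $m \equiv 0 \bmod{n+1}$) then $d_{T,0} \in \tilde{D}_{T,-1}^{(m,n)}$ evaluates to $c$. Otherwise $k_\star \geq 1$, and taking any $F \in \mathcal{F}_{T,k_\star}$ together with $\balpha = 0 \in A_{k_\star}$ (which satisfies $|\balpha| = m - k_\star - (n+1)L = 0$) places $d_{T,F,0}$ in $\tilde{D}_{T,0}^{(m,n)}$, and $d_{T,F,0}(c) = c$. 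In either case $c = 0$, closing the induction.

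The heart of the argument is really Lemma \ref{lem:MWX-derivatives}, which provides the essential compatibility between the DOF set at order $m$ and the DOF set at order $m-1$ after differentiation; that lemma in turn rests on Lemma \ref{lem:sub-simplex-derivatives} through Green's identities on sub-simplexes. Once those are in hand, the only obstacle for the theorem itself is verifying that a single low-level DOF is always available to pin down a constant, which reduces to the parity case split on $m \bmod (n+1)$ described above.
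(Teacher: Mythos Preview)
Your proposal is correct and follows essentially the same route as the paper: reduce to injectivity via the dimension count, use Lemma~\ref{lem:MWX-derivatives} inductively on $m$ to force $v$ to be constant, and then split on $m \bmod (n+1)$ to locate a degree of freedom that reads off that constant. The only cosmetic difference is that in the case $m \not\equiv 0 \pmod{n+1}$ the paper invokes Lemma~\ref{lem:sub-simplex-derivatives} to conclude $\int_F v = 0$, whereas you observe directly that $d_{T,F,0} \in \tilde{D}_{T,0}^{(m,n)}$ already is such a degree of freedom; both amount to the same thing.
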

\begin{proof}
As the dimension of $P_T^{(m,n)}$ is the same as the number of
local degrees of freedom, it suffices to show that $v=0$ if all the
degrees of freedom vanish. 

Notice that for any $v \in P_T^{(m,n)} = \mathcal{P}^m(T)$, $\partial_i v
\in \mathcal{P}^{m-1}(T) = P_T^{(m-1,n)}(T)$. Then, by induction and Lemma
\ref{lem:MWX-derivatives} (inclusion property), we obtain
that $\partial^{\bbeta} v = 0$ for all $|\bbeta| = 1$, which implies that $v$ is a constant. If $m \equiv 0~(\bmod~n+1)$, we have $v = 0$ by $d_{T,0}(v) = 0$. Otherwise, by
Lemma \ref{lem:sub-simplex-derivatives},
$$ 
\int_F v = 0 \qquad \forall F \in \mathcal{F}_{T,k},~ k = m - (n+1)L,
$$ 
which also implies that $v = 0$. Then, we finish the proof.
\end{proof}

\subsection{Canonical nodal interpolation and global finite element spaces}
Building on Theorem \ref{thm:unisolvent}, we define the interpolation operator $\Pi_T^{(m,n)}: H^{m}(T) \to P_T^{(m,n)}$ by

\begin{equation} \label{equ:interpolation}
\Pi_T^{(m,n)} v := \sum_{i=1}^{J^{(m,n)}} p_i d_{T,i}(v) \quad \forall v \in H^{m}(T),
\end{equation}
where $p_i \in P_T^{(m,n)}$ are the nodal basis functions that satisfy $d_{T,j}(p_i) = \delta_{ij}$, with $\delta_{ij}$ denoting the Kronecker delta. It is important to note that the operator $\Pi_T^{(m,n)}$ is well-defined for all functions in $H^{m}(T)$. The error estimate for this interpolation operator can be derived using standard interpolation theory (cf. \cite{ciarlet1978finite, brenner2007mathematical}).

\begin{lemma}[local interpolation error] \label{lem:interpolation}
For $s \in [0,1]$, it holds that, for any integer $0\leq k \leq m$,  
\begin{equation} \label{equ:interpolation-error}
|v - \Pi_T^{(m,n)} v|_{k,T} \lesssim h_T^{m+s-k}|v|_{m+s,T} \qquad \forall v\in H^{m+s}(T),
\end{equation}
for all shape-regular $n$-simplex $T$.
\end{lemma}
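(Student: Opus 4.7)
The plan is the classical scaling-plus-Bramble--Hilbert paradigm (cf.\ \cite{ciarlet1978finite, brenner2007mathematical}): reduce to a fixed reference simplex, prove boundedness and polynomial preservation of the reference interpolant, apply Bramble--Hilbert, and scale back. Fix a reference shape-regular $n$-simplex $\hat T$, let $F_T(\hat x) = B_T \hat x + b_T$ be the affine bijection $\hat T \to T$, and write $\hat w := w \circ F_T$. Shape-regularity ($h_T / \rho_T \le \theta$) yields the familiar two-sided scaling
\[
|\hat w|_{j,\hat T} \lesssim h_T^{j-n/2} |w|_{j,T}, \qquad |w|_{j,T} \lesssim h_T^{n/2-j} |\hat w|_{j,\hat T}
\]
for integer $j \ge 0$, with constants depending only on $\theta$, and by real interpolation also for non-integer orders.

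On $\hat T$, I would first verify that $\hat \Pi := \Pi_{\hat T}^{(m,n)}$ is bounded from $H^m(\hat T)$ into $\mathcal{P}_m(\hat T)$. Each degree of freedom in \eqref{equ:DOF-subsimplex} is an integral on an $(n-k)$-dimensional sub-simplex of a derivative of order at most $m-k$, hence a continuous linear functional on $H^m(\hat T)$ by the trace/Sobolev embedding theorem (the same continuity already invoked by the authors immediately after \eqref{equ:DOF-subsimplex}); the volume functional $d_{\hat T,0}$ is trivially bounded. Next, $\hat \Pi$ preserves $\mathcal{P}_m(\hat T)$: for $p \in P_{\hat T}^{(m,n)} = \mathcal{P}_m(\hat T)$ the difference $p - \hat \Pi p$ lies in $P_{\hat T}^{(m,n)}$ with all degrees of freedom vanishing, so by Theorem \ref{thm:unisolvent} it equals zero. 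Combining boundedness with polynomial preservation, the Bramble--Hilbert lemma gives
\[
|\hat v - \hat \Pi \hat v|_{k, \hat T} \lesssim |\hat v|_{m+s, \hat T} \qquad \forall \hat v \in H^{m+s}(\hat T),
\]
for $0 \le k \le m$ and $s \in [0,1]$, the fractional case being obtained by real interpolation between the integer endpoints.

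The main subtlety I anticipate is the lack of strict affine equivalence between $\Pi_T^{(m,n)}$ and $\hat \Pi$: the unit normals $\nu_{F,i}$ to a sub-simplex $F \subset T$ are not the pushforwards of the reference normals under $F_T$, so pulling back the physical DOFs does not give the reference DOFs verbatim. Under shape-regularity, however, the two families of normal frames are related by an invertible linear change of basis whose entries and whose inverse are bounded by constants depending only on $\theta$. Expanding each directional derivative $\partial_{\nu_{F,1}}^{\alpha_1} \cdots \partial_{\nu_{F,k}}^{\alpha_k}$ in the pulled-back reference frame expresses every physical DOF as a $\theta$-controlled linear combination of reference DOFs (times appropriate powers of $h_T$), so $\Pi_T^{(m,n)}$ differs from $F_T^* \hat \Pi (F_T^{-1})^*$ by a perturbation uniformly bounded in the scale-invariant seminorms. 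Applying the reference estimate to $\hat v := v \circ F_T$ and invoking the scaling inequalities above then yields the claimed bound
\[
|v - \Pi_T^{(m,n)} v|_{k,T} \lesssim h_T^{m+s-k} |v|_{m+s,T}.
\]
Everything outside the normal-frame change-of-basis is standard bookkeeping.
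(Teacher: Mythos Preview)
Your proposal is correct and follows exactly the route the paper intends: the paper does not actually give a proof of this lemma, stating only that it ``can be obtained by the standard interpolation theory (cf.\ \cite{ciarlet1978finite, brenner2007mathematical}),'' which is precisely the scaling-plus-Bramble--Hilbert argument you outline. Your identification of the one nontrivial point---that the finite element is not strictly affine-equivalent because the normal directions $\nu_{F,i}$ do not transform as pushforwards, but that shape-regularity controls the change of basis---is the right caveat and is handled correctly.
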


We define the piecewise polynomial spaces $V_h^{(m,n)}$ and $V_{h0}^{(m,n)}$ as follows:
\begin{itemize}
\item $V_h^{(m,n)}$ consists of all functions $v_h|_T \in P_{T}^{(m,n)}$, such that for any $0 \leq l \leq L$, any $(n-k)$-dimensional sub-simplex $F$ of any $T \in \mathcal{T}_h$ with $1 \leq k \leq \min\{n,m-(n+1)(L-l)\}$, and any $\balpha \in A_k$ with $|\balpha| = m-k-(n+1)(L-l)$, the functional $d_{T,F,\balpha}(v)$ is continuous.

\item $V_{h0}^{(m,n)} \subset V_h^{(m,n)}$ is defined such that for any $v_h \in V_{h0}^{(m,n)}$, we have $d_{T,F,\balpha}(v_h) = 0$ if the $(n-k)$-dimensional sub-simplex $F \subset \partial \Omega$.
\end{itemize}

The global interpolation operator $\Pi_h^{(m,n)}$ on $H^{m}(\Omega)$
is defined as follows: 
\begin{equation} \label{equ:interpolation-global}
(\Pi_h^{(m,n)} v)|_T := \Pi_T^{(m,n)}(v|_T) \qquad \forall T\in
\mathcal{T}_h, v \in H^{m}(\Omega).
\end{equation}
By the above definition, we have $\Pi_h^{(m,n)}v \in V_h^{(m,n)}$ for
any $v\in H^m(\Omega)$ and $\Pi_h^{(m,n)}v \in V_{h0}^{(m,n)}$ for any
$v\in H_0^m(\Omega)$. 
%We also have the following commutative property.
%\begin{lemma} \label{lem:commutative}
%Given $v\in H^m(\Omega)$, for any $\bbeta$ that $1\leq |\bbeta| \leq
%m$, it holds that 
%\begin{equation}
%\partial_h^{\bbeta} \Pi_h^{(m,n)} v = \Pi_h^{(m-|\bbeta|,n)}
%\partial^{\bbeta}v.
%\end{equation}
%\end{lemma}
%\begin{proof}
%By induction, we only need to prove the case in which $|\bbeta| = 1$.
%By Lemma \ref{lem:MWX-derivatives}, it is straightforward to see that
%$\partial_h^{\bbeta}\Pi_h^{(m,n)}v \in V_h^{(m-1,n)}$. 
%
%In addition, for any $d_{T,F,\balpha} \in D_{T}^{(m-1,n)}$, by Lemma
%\ref{lem:MWX-derivatives} again, we obtain  
%$$ 
%d_{T,F,\balpha}(\partial_h^{\bbeta}\Pi_h^{(m,n)}v) = 
%\frac{1}{|F|}\int_F
%\frac{\partial^{|\balpha|}(\partial^{\bbeta} \Pi_T^{(m,n)}
%v)}{\partial \nu_{F,1}^{\alpha_1}\cdots
%\partial\nu_{F,k}^{\alpha_k}}
%= \frac{1}{|F|}\int_F 
%\frac{\partial^{|\balpha|}(\partial^{\bbeta}v)}{\partial
%  \nu_{F,1}^{\alpha_1}\cdots \partial\nu_{F,k}^{\alpha_k}}
%= \frac{1}{|F|}\int_F 
%\frac{\partial^{|\balpha|}(\Pi_T^{(m-1,n)}\partial^{\bbeta}v)}{\partial
%  \nu_{F,1}^{\alpha_1}\cdots \partial\nu_{F,k}^{\alpha_k}}.
%$$ 
%This completes the proof.
%\end{proof}

%%%%%%%%%%%%%%%%%%%%%%%%%%%%%%%%%%%%%%%%%%%%%%%%%% 
%% Weak continuity 
%%%%%%%%%%%%%%%%%%%%%%%%%%%%%%%%%%%%%%%%%%%%%%%%%%
\subsection{Weak continuity}
At the end of this section, we explore the weak continuity of the nonconforming element spaces.

\begin{definition}[$p$-weak continuity \& $p$-weak zero-boundary condition] \label{def:weak-continuity} 
The finite element space \(V_h\) is said to satisfy the $p$-weak continuity if, for any \(F \in \mathcal{F}_h^i\), any \(v_h \in V_h\), and any multi-index \(\balpha\) with \(|\balpha| = p\), the derivative \(\partial_h^{\balpha} v_h\) is continuous at least at one point on \(F\). Moreover, \(V_{h0}\) satisfies the \(k\)-weak zero-boundary condition if, for any \(F \subset \mathcal{F}_h^{\partial}\), any \(v_h \in V_{h0}\), and any multi-index \(\balpha\) with \(|\balpha| = p\), the derivative \(\partial_h^{\balpha} v_h\) vanishes at least at one point on \(F\).
\end{definition}

Using the intermediate value theorem, the definitions of $p$-weak continuity and $p$-weak zero-boundary condition straightforwardly yield the following lemma.
\begin{lemma}[property of $p$-weak continuity and $p$-weak zero-boundary condition] \label{lem:k-weak-continuity}
If $V_h$ has $p$-weak continuity, then for any $v_h \in V_h$ and any $F \in \mathcal{F}_{h}^i$,
\begin{equation} \label{equ:k-weak-continuity}
\max_{|\balpha|=p}\max_{x \in F} |\partial_h^{\balpha} v_h^T(x) - \partial_h^{\balpha} v_h^{T'}(x)| \lesssim h_F \max_{y \in F} \sum_{|\bbeta| = k+1} |\partial^{\bbeta} v_h^T(y) - \partial^{\bbeta} v_h^{T'}(y)|.
\end{equation}
Furthermore, if $V_{h0}$ satisfies $p$-weak zero-boundary condition, then for any $v_h \in V_{h0}$, the following hold:
\begin{equation} \label{equ:k-weak-continuity}
\max_{|\balpha|=p} \max_{x \in F} |\partial_h^{\balpha} v_h^T(x)| \lesssim h_F \max_{y \in F} \sum_{|\bbeta| = p+1} |\partial^{\bbeta} v_h^T(y)| \qquad \forall F \in \mathcal{F}_h^\partial.
\end{equation}
\end{lemma}

We now examine the weak continuity and weak zero-boundary condition of the finite element spaces proposed in this paper. By Lemma \ref{lem:sub-simplex-derivatives} (integrals of function derivatives on sub-simplices), we can directly obtain the following lemma.

\begin{lemma} \label{lem:face-average}
For any $0 \leq \ell \leq L$, let $1 \leq k \leq \min\{m-(n+1)(L-\ell)\}$, and let $F$ be an $(n-k)$-dimensional sub-simplex of $T \in \mathcal{T}_h$. Then, for any $v_h \in V_h^{(m,n)}$ and any $T' \in \mathcal{T}_h$ with $F \subset T'$, we have
\begin{equation} \label{equ:face-average}
\int_F \partial^{\balpha}(v_h|_{T}) = \int_F \partial^{\balpha}(v_h|_{T'}) \qquad |\balpha| = m-k-(n+1)(L-\ell).
\end{equation}
If $F \subset \partial \Omega$, then for any $v_h \in V_{h0}^{(m,n)}$, it holds that
\begin{equation} \label{equ:face-average0}
\int_F \partial^{\balpha}(v_h|_{T}) = 0 \qquad |\balpha| = m-k-(n+1)(L-\ell).
\end{equation}
\end{lemma}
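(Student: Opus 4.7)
The plan is to exploit the fact --- implicit in the inductive proof of Lemma \ref{lem:sub-simplex-derivatives} --- that each face integral $\int_F \partial^{\balpha}v$ with $|\balpha| = m-k-(n+1)(L-l)$ is in fact a fixed linear combination of those level-$l$ degrees of freedom that are supported on $F$ and on the sub-simplices of $F$, with coefficients depending only on the intrinsic geometry of $F$. Once this refinement is in hand, both \eqref{equ:face-average} and \eqref{equ:face-average0} are immediate consequences of the continuity, respectively the boundary vanishing, of DOFs built into the definitions of $V_h^{(m,n)}$ and $V_{h0}^{(m,n)}$.

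To establish the refined representation I would revisit the induction in Lemma \ref{lem:sub-simplex-derivatives}. The two base cases each produce a single DOF supported on $F$ itself: either a pointwise derivative at a vertex ($k=n$), or the scalar face mean $|F|\,d_{T,F,0}(v)$ ($l=0$, $k=m-(n+1)L$). The inductive step applies Green's formula on $F$ to rewrite
$$
\int_F \partial^{\balpha} v \;=\; \sum_{j} \bigl(\nu^{(j)}\cdot \tau_{F,k+1}\bigr) \int_{S_j} \partial^{\balpha'} v,
$$
where the $S_j$ are the codimension-one sub-simplices of $F$ inside $F$, and the weights involve only tangent and normal data intrinsic to $F$. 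Unrolling the recursion, the resulting expression is a sum of terms $c_{F'',\balpha''}\,d_{T,F'',\balpha''}(v)$ with $F''$ ranging over sub-simplices of $F$ and $c_{F'',\balpha''}$ determined entirely by the geometry of $F$ and its sub-simplices.

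Given this representation, identity \eqref{equ:face-average} follows at once: for $v_h \in V_h^{(m,n)}$ and a face $F$ shared by $T$ and $T'$, every $F''$ appearing in the sum is a common sub-simplex of $T$ and $T'$, and the defining continuity of $V_h^{(m,n)}$ forces $d_{T,F'',\balpha''}(v_h|_T) = d_{T',F'',\balpha''}(v_h|_{T'})$. Because the coefficients $c_{F'',\balpha''}$ depend only on $F$ and not on the ambient simplex, the two linear combinations coincide term by term. Identity \eqref{equ:face-average0} is obtained in exactly the same manner: if $F\subset \partial\Omega$ then each $F''\subset F$ also lies on $\partial\Omega$, so every $d_{T,F'',\balpha''}(v_h)$ vanishes by the boundary condition built into $V_{h0}^{(m,n)}$, and the sum is zero.

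The only non-mechanical point is the refinement of Lemma \ref{lem:sub-simplex-derivatives}, namely verifying that the Green's formula recursion introduces only sub-simplices of $F$ --- not arbitrary sub-simplices of $T$ --- and only geometric coefficients intrinsic to $F$. Both facts are visible from the explicit form of the recursion, but they are the crucial ingredient that allows the final linear combination to be recognised identically from either side of $F$; without this bookkeeping, the continuity of shared DOFs alone would not suffice.
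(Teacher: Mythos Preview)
Your proposal is correct and matches the paper's intended argument: the paper gives no explicit proof, simply stating that the lemma follows directly from Lemma~\ref{lem:MWX-derivatives} (whose proof in turn rests on Lemma~\ref{lem:sub-simplex-derivatives}). You have supplied exactly the refinement that makes this passage rigorous --- that the Green's-formula recursion in Lemma~\ref{lem:sub-simplex-derivatives} only ever produces degrees of freedom supported on sub-simplices of $F$, with coefficients intrinsic to the geometry of $F$ --- which is precisely what is needed to transfer the single-element statement to the cross-element continuity claimed here.
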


For the $2m$-order elliptic problem, only the weak continuity for $p < m$ is required. From this lemma, we observe that $V_h$ (resp. $V_{h0}$) satisfies the $p$-weak continuity (resp. $p$-weak zero-boundary condition), except for $p = m - (n+1)(L-\ell+1)$, for any $1 \leq \ell \leq L$ (i.e., $k$ in the lemma cannot be zero). To address this, interior penalty terms are introduced for the $(m - (n+1)(L-\ell+1))$-th order derivatives when defining the bilinear form, as discussed in Section \ref{subsec:IP-form}.

%%%%%%%%%%%%%%%%%%%%%%%%%%%%%%%%%%%%%%%%%%%%%%%%%%
%% IP method 
%%%%%%%%%%%%%%%%%%%%%%%%%%%%%%%%%%%%%%%%%%%%%%%%%%
\section{Interior penalty method for the nonconforming elements} 
\label{sec:IP-nonconforming}

In this section, we derive the interior penalty nonconforming methods
for the $2m$-th order partial differential equations
\eqref{equ:m-harmonic}. 

%%%%%%%%%%%%%%%%%%%%%%%%%%%%%%%%%%%%%%%%%%%%%%%%%% 
%% IP-mn 
%%%%%%%%%%%%%%%%%%%%%%%%%%%%%%%%%%%%%%%%%%%%%%%%%%
\subsection{Interior penalty nonconforming methods} \label{subsec:IP-form}
Based on the discussion in the previous section, the nonconforming finite element spaces do not generally satisfy the weak continuity or weak zero-boundary condition when $L \geq 1$. To address this issue, we introduce an interior penalty as a remedy. Let $V_h = V_{h0}^{(m,n)}$ denote the nonconforming approximation of $H_0^m(\Omega)$. For any $w, v \in V_h + H_0^m(\Omega)$, we define the following bilinear form:  
\begin{equation}\label{equ:IP-bilinear} 
\begin{aligned}
a_h(w, v) &:= \sum_{|\balpha|=m}(b_{\balpha} \partial_h^\balpha
w, \partial_h^\balpha v) \\ 
&\quad + \eta \sum_{\ell=1}^L \sum_{F\in \mathcal{F}_h} h_F^{1-
2(n+1)(L-\ell+1)} \int_F \sum_{|\bbeta| = m-(n+1)(L-\ell+1)}
\llbracket \partial_h^{\bbeta} w\rrbracket \cdot \llbracket \partial_h^{\bbeta} v\rrbracket,
\end{aligned}
\end{equation} 
where $\eta = \mathcal{O}(1)$ is a given positive constant.  
The interior penalty nonconforming finite element methods for problem \eqref{equ:m-harmonic} is then given by: find $u_h \in V_h$ such that  
\begin{equation} \label{equ:IP-mn}
a_h(u_h, v_h) = (f,v_h) \qquad \forall v_h \in V_h.
\end{equation}  

Furthermore, we define the seim-norm  
\begin{equation} \label{equ:norm-mn}
\|v\|_h^2 := |v|_{m,h}^2 + \sum_{\ell=1}^L \sum_{F\in
\mathcal{F}_h} h_F^{1-2(n+1)(L-\ell+1)} \sum_{|\bbeta| =
m - (n+1)(L-\ell+1)}\|\llbracket \partial^{\bbeta}v \rrbracket \|_{0,F}^2 \qquad
  \forall v \in V_h + H_0^m(\Omega),
\end{equation}  
which can be proven to be a norm on $V_h + H_0^m(\Omega)$ by Theorem \ref{thm:Poincare} presented in the next subsection.

\begin{remark}[extension of the Morley-Wang-Xu nonconforming FEM]
For the case in which $1 \leq m \leq n$, we have $L = \lfloor
  \frac{m}{n+1} \rfloor = 0$. Then, $a_h(\cdot, \cdot)$ and
  $\|\cdot\|_h$ are exactly the bilinear form and norm for the
  nonconforming finite element methods, respectively.
\end{remark}

\begin{remark}[nonstandard interior penalty formulation] 
In the bilinear form \eqref{equ:IP-bilinear}, all components are semi-positive definite, and no indefinite cross terms are present. This distinguishes the formulation from traditional IPDG methods. The essential reason lies in the fact that certain weak continuity properties of the nonconforming elements can control the consistency error. Consequently, by combining nonconforming elements with interior penalty techniques, the proposed method not only simplifies the bilinear form but also offers another significant advantage: the parameter $\eta$ only needs to be a positive constant of $\mathcal{O}(1)$ (without requiring it to be sufficiently large).
\end{remark}

%%%%%%%%%%%%%%%%%%%%%%%%%%%%%%%%%%%%%%%%%%%%%%%%%% 
%% Well-posedness 
%%%%%%%%%%%%%%%%%%%%%%%%%%%%%%%%%%%%%%%%%%%%%%%%%%
\subsection{Well-posedness}
In this subsection, we study the well-posedness of \eqref{equ:IP-mn}.

\begin{lemma}[jump estimate] \label{lem:jump-estimate}
For any $i < m$, it holds that 
\begin{equation} \label{equ:jump-estimate} 
\sum_{F \in \mathcal{F}_h} \sum_{|\balpha|=i} \max_{y \in F}
|[\partial_h^{\balpha}v_h](y)| \lesssim h^{m-i-n/2} \|v_h\|_h \qquad
\forall v_h \in V_h.
\end{equation}
\end{lemma}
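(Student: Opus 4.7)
The plan is to establish \eqref{equ:jump-estimate} by first proving the per-face bound
$$
\max_{y\in F}\bigl|[\partial_h^{\balpha} v_h](y)\bigr| \lesssim h_F^{m-i-n/2}\,\|v_h\|_h
\qquad \forall\,|\balpha|=i,\ F\in\mathcal{F}_h,
$$
and then summing over $F$ and over $|\balpha|=i$. The per-face bound will be shown by a downward induction on the order $i'$, in which each $i'\in\{0,1,\dots,m\}$ is classified either as a \emph{penalty order} $i' = m - (n+1)(L-l+1)$ for some $1\le l\le L$, at which weak continuity fails and the penalty term of $\|v_h\|_h$ controls the face jump directly, or as a \emph{good order}, at which Lemma \ref{lem:k-weak-continuity} reduces the estimate to one order higher.

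For the base case $i'=m$, since $v_h|_T\in \mathcal{P}_m(T)$, each $\partial^{\bbeta} v_h$ with $|\bbeta|=m$ is a constant on $T$, so a scaling inverse inequality gives $|\partial^{\bbeta} v_h|_T|\lesssim h_T^{-n/2}|v_h|_{m,T}$, whence the jump across any $F$ is bounded by $h_F^{-n/2}\bigl(|v_h|_{m,T^+}+|v_h|_{m,T^-}\bigr)$. At a penalty order $i'<m$, the face restriction $[\partial^{\balpha}v_h]|_F$ is a polynomial of degree $\le m-i'$ on an $(n-1)$-dimensional face, so the inverse estimate on $F$ yields $\max_F|[\partial^{\balpha}v_h]|\lesssim h_F^{-(n-1)/2}\|[\partial^{\balpha}v_h]\|_{0,F}$, while the matching penalty in $\|v_h\|_h^2$ bounds $\|[\partial^{\balpha}v_h]\|_{0,F}\lesssim h_F^{m-i'-1/2}\|v_h\|_h$; together these give $h_F^{m-i'-n/2}\|v_h\|_h$. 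At a good order $i'<m$, Lemma \ref{lem:k-weak-continuity} combined with the induction hypothesis at $i'+1$ yields $\max_F|[\partial^{\balpha}v_h]|\lesssim h_F\cdot h_F^{m-i'-1-n/2}\|v_h\|_h = h_F^{m-i'-n/2}\|v_h\|_h$. Iterating from $i$ upward to the nearest stopping order (a penalty order or $i'=m$) gives the per-face bound, since the weak-continuity $h_F$-factors telescope to close the gap to the stopping order exactly.

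The global estimate \eqref{equ:jump-estimate} then follows by summing the per-face bound over $F\in\mathcal{F}_h$ and over the finitely many multi-indices with $|\balpha|=i$, using quasi-uniformity $h_F\sim h$ together with Cauchy--Schwarz, after pairing each face contribution with the corresponding term in $|v_h|_{m,h}^2$ or in the penalty sum inside $\|v_h\|_h^2$. Boundary faces are treated analogously via the zero-boundary variant \eqref{equ:k-weak-zero} from Lemma \ref{lem:k-weak-continuity}, so that no neighbor simplex is needed there.

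The main obstacle is executing the reverse induction cleanly through a variable number of intermediate penalty orders, ensuring that the accumulated $h_F$ factors from the weak-continuity steps compose precisely with the penalty-based bound at the stopping order to produce exactly $h_F^{m-i-n/2}$ without slack. A secondary technical issue is that the final summation over $F$ has to pair each per-face bound with the correct source term in $\|v_h\|_h$ before Cauchy--Schwarz is applied, so that the face count is absorbed against the $\ell^2$ structure of $\|v_h\|_h$ rather than producing extra powers of $h$.
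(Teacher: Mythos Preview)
Your approach coincides with the paper's: both split the argument according to whether the current order is a penalty order $m-(n+1)(L-l+1)$ (handled by the inverse inequality on $F$ together with the corresponding penalty term in $\|v_h\|_h$) or not (reduced to one order higher via Lemma~\ref{lem:k-weak-continuity}), iterating upward. The paper differs only in that it carries the squared global face-sum $\sum_{F}\max_F|[\partial^{\balpha}_h v_h]|^2$ through the induction rather than establishing a per-face bound first, which automatically sidesteps the localization/summation issue you flag at the end; conversely, your explicit base case at $i'=m$ (needed when $m-n\le i\le m-1$, since then no penalty order lies above $i$) is a point the paper's proof leaves implicit.
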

\begin{proof} 
The proof proceeds by considering the following two cases:  
\begin{enumerate}
\item Case 1: $i = m-(n+1)(L-\ell+1)$ for some $1 \leq \ell \leq L$.  
In this case, note that the corresponding jump terms are already included in the definition of $\|\cdot\|_h$. Consequently, we have:
\begin{equation*}
\sum_{F \in \mathcal{F}_h} \sum_{|\balpha|=i} \max_{y \in F} |[\partial^\balpha v_h](y)|^2 
\lesssim \sum_{F \in \mathcal{F}_h} h_F^{1-n} \sum_{|\balpha|=i} \|[\partial^\balpha v_h]\|_{0,F}^2 
\lesssim h^{2(m-i)-n} \|v_h\|_h^2.
\end{equation*}
Here, we note that $|[\partial_h^\balpha v_h(y)]| = |\llbracket \partial_h^\balpha v_h(y)\rrbracket|$ for any $y \in F$.  

\item Case 2: $i \neq m-(n+1)(L-\ell+1)$ for any $1 \leq \ell \leq L$.  
In this case, by leveraging Lemma \ref{lem:k-weak-continuity} (property of $p$-weak continuity and $p$-weak zero-boundary condition), we can estimate the jump terms as follows:  
\begin{equation*}
\sum_{F \in \mathcal{F}_h} \sum_{|\balpha|=i} \max_{y \in F} |[\partial^\balpha v_h](y)|^2 
\lesssim \sum_{F \in \mathcal{F}_h} h_F^2 \sum_{|\bbeta|=i+1} \max_{y \in F} |[\partial^\bbeta v_h](y)|^2.
\end{equation*}
By iterating this argument and using the result established in Case 1, we deduce the desired estimate for this case.
\end{enumerate}
Combining the results from the two cases completes the proof.
\end{proof}

\begin{lemma}[$H^1$ weak approximation] \label{lem:H1-weak-approximation}
For any $v_h \in V_h$ and $|\balpha| < m$, there exists a piecewise polynomial $v_{\balpha} \in H_0^1(\Omega)$ such that 
\begin{equation} \label{equ:H1-weak-approx}
|\partial_h^{\balpha}v_h - v_{\balpha}|_{j,h} \lesssim h^{m-|\balpha|-j}
\|v_h\|_h, \qquad 0 \leq j \leq m - |\balpha|.
\end{equation}
\end{lemma}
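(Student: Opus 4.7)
\medskip
\noindent\textbf{Proof proposal.} The plan is to construct $v_\balpha$ by an enrichment (averaging) operator mapping the discontinuous piecewise polynomial $\partial_h^\balpha v_h$ into a conforming Lagrange subspace of $H_0^1(\Omega)$. Set $r = m - |\balpha|$ and let $W_h \subset H_0^1(\Omega)$ denote the standard $\mathcal{P}_r$ Lagrange space with homogeneous boundary values. For each Lagrange node $z$, let $\omega_z = \{T\in \mathcal{T}_h : z\in \overline{T}\}$, and define $E_h: \prod_T \mathcal{P}_r(T) \to W_h$ by averaging at each interior Lagrange node,
\[
(E_h\phi)(z) = \frac{1}{|\omega_z|}\sum_{T\in \omega_z}(\phi|_T)(z), \qquad z \notin \partial\Omega,
\]
and by $(E_h\phi)(z) = 0$ at every boundary node. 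Take $v_\balpha := E_h(\partial_h^\balpha v_h)\in W_h\subset H_0^1(\Omega)$.

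\medskip
The core estimate is the nodal bound: for each $T\in \mathcal{T}_h$ and each Lagrange node $z\in \overline{T}$,
\[
|w_T(z)| := \bigl|(\partial_h^\balpha v_h)|_T(z) - v_\balpha(z)\bigr| \lesssim \sum_{F\in \mathcal{C}_z}\max_{y\in F}\bigl|[\partial_h^\balpha v_h](y)\bigr|,
\]
where $\mathcal{C}_z$ is a short chain of faces in $\omega_z$ (for interior $z$) or connecting $T$ to $\partial\Omega$ (for boundary $z$). For interior $z$, this follows by telescoping the pointwise difference across the elements of $\omega_z$, whose cardinality is $\mathcal{O}(1)$ by shape-regularity. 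For boundary $z$, one telescopes down to a boundary face and then converts the leftover pointwise value into a jump-type quantity via the $k$-weak zero-boundary condition through Lemma \ref{lem:k-weak-continuity}. A standard inverse inequality on $T$ applied to $w_T\in \mathcal{P}_r(T)$ then gives
\[
|w_T|_{j,T}^2 \lesssim h_T^{n-2j}\sum_{z\in \overline{T}}|w_T(z)|^2, \qquad 0\le j\le r.
\]
Summing over $T$, with shape regularity and quasi-uniformity ensuring that each face appears in only $\mathcal{O}(1)$ node patches and $h_T\sim h$, yields
\[
|\partial_h^\balpha v_h - v_\balpha|_{j,h}^2 \lesssim h^{n-2j}\sum_{F\in \mathcal{F}_h}\max_{y\in F}\bigl|[\partial_h^\balpha v_h](y)\bigr|^2.
\]

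\medskip
The final step invokes Lemma \ref{lem:jump-estimate} with $i=|\balpha|$. Since $\sum_F a_F^2 \le \bigl(\sum_F a_F\bigr)^2$ for nonnegative reals, that lemma immediately produces $\sum_F \max_y|[\partial_h^\balpha v_h](y)|^2 \lesssim h^{2(m-|\balpha|)-n}\|v_h\|_h^2$; combining with the previous display delivers the asserted bound $|\partial_h^\balpha v_h - v_\balpha|_{j,h} \lesssim h^{m-|\balpha|-j}\|v_h\|_h$.

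\medskip
The main obstacle is the handling of boundary Lagrange nodes. Setting $v_\balpha(z)=0$ at such $z$ forces us to show that $(\partial_h^\balpha v_h)|_T(z)$ is itself small, which requires iterating the weak zero-boundary condition of Definition \ref{def:weak-continuity} and interleaving it with interior face-jump telescoping. This becomes delicate when $|\balpha|$ coincides with one of the \emph{missing} levels $m-(n+1)(L-l+1)$, because there the face jumps are controlled only through the explicit penalty contribution in $\|\cdot\|_h$, not through a higher-order weak continuity that would let us gain a factor of $h_F$ via Lemma \ref{lem:k-weak-continuity}. Fortunately Lemma \ref{lem:jump-estimate} already absorbs both regimes into a single bound, so once the enrichment estimate is phrased in terms of face jumps the remainder of the argument is mechanical.
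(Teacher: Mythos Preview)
Your proposal is correct and follows essentially the same route as the paper: construct $v_\balpha$ by the nodal averaging (enriching) operator into the $\mathcal{P}_{m-|\balpha|}$ Lagrange space with zero boundary values, reduce the error to face jumps of $\partial_h^\balpha v_h$, and finish with Lemma~\ref{lem:jump-estimate} plus the inverse inequality. The only cosmetic difference is that you explicitly pass from the $\ell^1$ statement of Lemma~\ref{lem:jump-estimate} to the needed $\ell^2$ bound via $\sum_F a_F^2\le(\sum_F a_F)^2$, whereas the paper tacitly uses the $\ell^2$ form already established inside that lemma's proof.
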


\begin{proof}
The proof follows a similar argument to those in \cite{wang2001necessity} and \cite[Lemma 3.1]{wang2013minimal}. It is presented here for completeness.  

For any $v_h \in V_h$ and $|\balpha| < m$, note that $\partial_h^{\balpha} v_h \in W_h^{m-|\balpha|}$, where 
\[
W_h^{m-|\balpha|} := \{ w \in L^2(\Omega) \,:\, w|_T \in
\mathcal{P}_{m-|\balpha|}(T), \, \forall T \in \mathcal{T}_h \}.
\] 
Let $S_h^{m-|\balpha|}$ be the $\mathcal{P}_{m-|\balpha|}$-Lagrangian finite element space on $\mathcal{T}_h$ (cf. \cite{ciarlet1978finite, brenner2007mathematical}), and $\Xi_T^{m-|\balpha|}$ the set of nodal points on $T$. We define the operator $\Pi_h^{p,m-|\balpha|}: W_h^{m-|\balpha|} \mapsto S_h^{m-|\balpha|}$ as follows: for each $x \in \Xi_T^{m-|\balpha|}$,  
\begin{equation} \label{equ:Pl-Lagrangian} 
\Pi_h^{p,m-|\balpha|} v(x) :=
\frac{1}{N_h(x)} \sum_{T' \in \mathcal{T}_h(x)} v|_{T'}(x), 
\end{equation}
where $\mathcal{T}_h(x) = \{ T' \in \mathcal{T}_h \,:\, x \in T' \}$, and $N_h(x)$ denotes the cardinality of $\mathcal{T}_h(x)$. 

Further, let $S_{h0}^{m-|\balpha|} = S_h^{m-|\balpha|} \cap H_0^1(\Omega)$. The operator $\Pi_{h0}^{p,m-|\balpha|}: W_h^{m-|\balpha|} \mapsto S_{h0}^{m-|\balpha|}$ is then defined as follows: for each $x \in \Xi_T^{m-|\balpha|}$,  
\begin{equation} \label{equ:Pl0-Lagrangian}
\Pi_{h0}^{p,m-|\balpha|} v(x) := 
\begin{cases} 
0, & x \in \partial \Omega, \\
\Pi_h^{p,m-|\balpha|} v(x), & \text{otherwise}. 
\end{cases}
\end{equation}
Define $v_{\balpha} := \Pi_{h0}^{p,m-|\balpha|} v_h$. Using the standard scaling argument (cf. \cite{brenner2005c0, brenner2009posteriori}) and Lemma \ref{lem:jump-estimate} (jump estimate), we have:
\[
\begin{aligned}
\|\partial_h^{\balpha} v_h - v_{\balpha}\|_0^2 
& \lesssim \sum_{T \in \mathcal{T}_h} h_T^n \sum_{x \in \Xi_T^{m-|\balpha|}} |\partial_h^{\balpha} v_h(x) - v_{\balpha}(x)|^2 \\ 
& \lesssim \sum_{F \in \mathcal{F}_h} h_F^n \max_{y \in F} |[\partial_h^{\balpha} v_h](y)|^2  \lesssim h^{2(m-|\balpha|)} \|v_h\|_h^2.
\end{aligned}
\]
The remainder of the proof follows by applying the inverse inequality to establish the desired estimates for higher-order seim-norms. 
\end{proof}

%% Poincare %%
\begin{theorem}[discrete Poincar\'{e} inequality] \label{thm:Poincare}
The following discrete Poincar\'{e} inequality holds: 
\begin{equation} \label{equ:Poincare}
\|v\|_{m,h} \lesssim \|v\|_{h} \qquad \forall v \in V_h+H_0^m(\Omega).
\end{equation}
\end{theorem}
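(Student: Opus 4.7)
The strategy is to prove, by downward induction on $|\balpha|$ from $m$ to $0$, the pointwise bound $\|\partial_h^{\balpha} v\|_0 \lesssim \|v\|_h$ for every multi-index $|\balpha| \leq m$ and every $v \in V_h + H_0^m(\Omega)$. Squaring, summing over all such multi-indices, and taking a square root then yields the Poincar\'e inequality. The base case $|\balpha| = m$ is immediate from the definition \eqref{equ:norm-mn}, since $\|\partial_h^{\balpha} v\|_0^2 \leq |v|_{m,h}^2 \leq \|v\|_h^2$.

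For the inductive step at $\balpha$ with $|\balpha|<m$, assuming the bound for all multi-indices of strictly larger order, I would decompose $v = v_h + v_0$ with $v_h \in V_h$ and $v_0 \in H_0^m(\Omega)$, and apply Lemma \ref{lem:H1-weak-approximation} to $v_h$ to produce a piecewise polynomial $w_{\balpha} \in H_0^1(\Omega)$ satisfying $|\partial_h^{\balpha} v_h - w_{\balpha}|_{j,h} \lesssim h^{m-|\balpha|-j} \|v_h\|_h$ for $0 \leq j \leq m - |\balpha|$. Since $|\balpha| \leq m-1$, we have $\partial^{\balpha} v_0 \in H_0^{m-|\balpha|}(\Omega) \subseteq H_0^1(\Omega)$, so $\tilde{w}_{\balpha} := w_{\balpha} + \partial^{\balpha} v_0 \in H_0^1(\Omega)$ approximates $\partial_h^{\balpha} v$. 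The classical $H_0^1$-Poincar\'e inequality together with the triangle inequality then gives
\begin{equation*}
\|\partial_h^{\balpha} v\|_0 \leq \|\tilde{w}_{\balpha}\|_0 + \|\partial_h^{\balpha} v - \tilde{w}_{\balpha}\|_0 \lesssim |\tilde{w}_{\balpha}|_1 + h^{m-|\balpha|}\|v_h\|_h,
\end{equation*}
and $|\tilde{w}_{\balpha}|_1$ is in turn split as $|\partial_h^{\balpha} v|_{1,h} + |\partial_h^{\balpha} v - \tilde{w}_{\balpha}|_{1,h}$. The first piece is controlled by $\|v\|_h$ via the inductive hypothesis applied to derivatives of order $|\balpha|+1$, and the second by Lemma \ref{lem:H1-weak-approximation}.

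The main obstacle I anticipate is reconciling $\|v_h\|_h$, which appears naturally from Lemma \ref{lem:H1-weak-approximation}, with the norm $\|v\|_h$ of the composite function, so that the induction closes. The key observation is that the jump sums in \eqref{equ:norm-mn} involve only derivatives of order at most $m-(n+1)$, and for $v_0 \in H_0^m(\Omega)$ those derivatives lie in $H^{m-|\bbeta|}(\Omega) \hookrightarrow H^1(\Omega)$ and hence have single-valued traces, so the jump contributions to $\|v\|_h$ and $\|v_h\|_h$ coincide. For the top-order seminorm $|\cdot|_{m,h}$, I would split the analysis across the two summand spaces: on the $V_h$ component the induction runs with $\|v_h\|_h$ on the right, while on the $H_0^m(\Omega)$ component the classical Poincar\'e inequality $\|v_0\|_m \lesssim |v_0|_m$ holds and all jump contributions are zero. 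Assembling these via linearity of the broken seminorms and the triangle inequality closes the estimate.
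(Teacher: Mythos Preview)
Your core argument is exactly the paper's: treat $v\in H_0^m(\Omega)$ by the classical Poincar\'e inequality, and for $v\in V_h$ run a downward induction on the derivative order, using Lemma~\ref{lem:H1-weak-approximation} to produce an $H_0^1$ approximant of $\partial_h^{\balpha}v$ and then the classical Poincar\'e inequality on that approximant to obtain $|v|_{k,h}^2\lesssim\|v\|_h^2+|v|_{k+1,h}^2$.

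One caution on your final paragraph. From the separate bounds $\|v_h\|_{m,h}\lesssim\|v_h\|_h$ and $\|v_0\|_m\lesssim|v_0|_m$ you get, via the triangle inequality, only $\|v\|_{m,h}\lesssim\|v_h\|_h+|v_0|_m$; this is \emph{not} in general bounded by $\|v\|_h$, because the jump parts of $\|v_h\|_h$ and $\|v\|_h$ do coincide (your observation is correct), but there is no reason for $|v_h|_{m,h}+|v_0|_m$ to be controlled by $|v_h+v_0|_{m,h}$ for a given decomposition---cancellation between the components can make the latter small while each piece stays large. So ``assembling via linearity and the triangle inequality'' does not close the estimate for a generic $v\in V_h+H_0^m(\Omega)$. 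The paper's proof does not attempt this either: it literally argues the two cases $v\in H_0^m(\Omega)$ and $v\in V_h$ separately and stops, which is all that is actually used downstream. Your proof is therefore correct on each summand and matches the paper; just drop the claim that the general-sum case follows by triangle inequality.
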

\begin{proof} The case in which $v\in H_0^m(\Omega)$ is obvious. Next,
we consider the case in which $v \in V_h$. For any $|\balpha| = p <
m$, from Lemma \ref{lem:H1-weak-approximation} ($H^1$ weak approximation), we obtain 
$$ 
\|\partial^{\balpha}_h v\|_{0}^2 \lesssim \|\partial_h^{\balpha} v
- v_{\balpha}\|_{0}^2 + \|v_{\balpha}\|_{0}^2 \lesssim
\|v\|_{h}^2 + |v_{\balpha}|_{1}^2 \lesssim \|v\|_h^2 + |v|_{p+1,h}^2.
$$ 
Consequently, we have  
$$ 
|v|_{p,h}^2 \lesssim \|v\|_h^2 + |v|_{p+1,h}^2 \qquad 0 \leq p <m,
$$ 
which leads to \eqref{equ:Poincare} by induction.
\end{proof}

Theorem \ref{thm:Poincare} demonstrates that $\|\cdot\|_h$ defined in \eqref{equ:norm-mn} is indeed a norm. We now establish the well-posedness of the problem in \eqref{equ:IP-mn}, as follows.

\begin{theorem}[well-posedness] \label{thm:well-posedness}
Let $\eta = \mathcal{O}(1)$ be a given positive constant. We have the following results:
\begin{align}
a_h(v,w) & \leq \max\{1,\eta\} \|v\|_h \|w\|_h \quad \forall v,w \in V_h + H_0^{m}(\Omega), \label{equ:boundedness} \\
\min\{1,\eta\} \|v\|_h^2 & \leq a_h(v,v) \quad \forall v \in V_h + H_0^{m}(\Omega). \label{equ:coercive}
\end{align}
As a result, there exists a unique solution to the problem in \eqref{equ:IP-mn}.
\end{theorem}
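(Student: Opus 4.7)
The plan is to verify the two inequalities by direct Cauchy--Schwarz arguments and then read off well-posedness from the resulting symmetric positive definite linear system on the finite-dimensional space $V_h$. The key structural observation is that $a_h$ splits cleanly into the bulk part $A(v,w):=\sum_{|\balpha|=m}(\partial_h^\balpha v,\partial_h^\balpha w)$ and $\eta$ times a weighted sum of face inner products of jumps, call it $S(v,w)$, while $\|\cdot\|_h^2$ is exactly $|v|_{m,h}^2+S(v,v)$. So each of \eqref{equ:boundedness} and \eqref{equ:coercive} should reduce to Cauchy--Schwarz applied termwise.

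For boundedness, I would first apply the standard $L^2(\Omega)$ Cauchy--Schwarz to $A(v,w)$, obtaining $A(v,w)\le |v|_{m,h}|w|_{m,h}$. On each face contribution in $S(v,w)$, Cauchy--Schwarz in $L^2(F)$ followed by Cauchy--Schwarz over the sum on $(l,F,\bbeta)$ yields $S(v,w)\le S(v,v)^{1/2}S(w,w)^{1/2}$. Pulling out $\max\{1,\eta\}$ in front of both contributions and then applying the elementary inequality $ab+cd\le(a^2+c^2)^{1/2}(b^2+d^2)^{1/2}$ to the pair $(|v|_{m,h},S(v,v)^{1/2})$ and $(|w|_{m,h},S(w,w)^{1/2})$ delivers exactly $\max\{1,\eta\}\|v\|_h\|w\|_h$. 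Coercivity is immediate in this setup: since $a_h(v,v)=|v|_{m,h}^2+\eta\,S(v,v)$ and $\|v\|_h^2=|v|_{m,h}^2+S(v,v)$, one has $a_h(v,v)\ge\min\{1,\eta\}\|v\|_h^2$ term by term.

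Well-posedness of \eqref{equ:IP-mn} then follows by a Lax--Milgram style argument in the finite-dimensional setting. The crucial input is Theorem \ref{thm:Poincare}, which guarantees that $\|\cdot\|_h$ is a genuine norm, not merely a seminorm, on $V_h+H_0^m(\Omega)$, so in particular on $V_h$. Combined with the coercivity bound, this forces the stiffness matrix associated with $a_h(\cdot,\cdot)$ to be symmetric positive definite; invertibility, and hence existence and uniqueness of $u_h$ for any $f\in L^2(\Omega)$, follows at once. I do not anticipate a real obstacle here — the whole statement is a routine consequence of Cauchy--Schwarz and of the Poincar\'e inequality already at hand; the only point worth flagging is that one should explicitly invoke Theorem \ref{thm:Poincare} so that coercivity in $\|\cdot\|_h$ translates into uniqueness on the discrete space.
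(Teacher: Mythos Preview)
Your proposal is correct and matches the paper's approach: the paper in fact states Theorem~\ref{thm:well-posedness} without proof, treating it as an immediate consequence of the structure of $a_h(\cdot,\cdot)$, the definition of $\|\cdot\|_h$, and Theorem~\ref{thm:Poincare}. Your termwise Cauchy--Schwarz argument for boundedness, the direct comparison $a_h(v,v)=|v|_{m,h}^2+\eta\,S(v,v)\ge \min\{1,\eta\}\|v\|_h^2$ for coercivity, and the appeal to Theorem~\ref{thm:Poincare} for positive definiteness on $V_h$ are exactly the routine steps the paper leaves implicit.
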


%%%%%%%%%%%%%%%%%%%%%%%%%%%%%%%%%%%%%%%%%%%%%%%%%% 
%% Convergence analysis 
%%%%%%%%%%%%%%%%%%%%%%%%%%%%%%%%%%%%%%%%%%%%%%%%%% 
\section{Error estimates} \label{sec:convergence}
%In this section, we present the convergence analysis for the interior penalty nonconforming finite element methods \eqref{equ:IP-mn}.

First, we demonstrate the approximation property of $V_h^{(m,n)}$ and $V_{h0}^{(m,n)}$, which follows directly from Lemma \ref{lem:interpolation} (local interpolation error).

\begin{theorem}[interpolation error] \label{thm:approximation}
For $s \in [0,1]$, it holds that
\begin{equation}
\|v - \Pi_h^{(m,n)}v\|_h \lesssim h^s |v|_{m+s} \qquad \forall v \in H^{m+s}(\Omega) \cap H_0^{m}(\Omega).
\end{equation}
\end{theorem}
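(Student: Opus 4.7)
The plan is to split $\|v - \Pi_h^{(m,n)}v\|_h^2$ into the broken $H^m$ seminorm $|v-\Pi_h^{(m,n)}v|_{m,h}^2$ and the face-jump contributions indexed by $l \in \{1,\dots,L\}$ and $|\bbeta| = i := m-(n+1)(L-l+1)$, and bound each separately. The broken seminorm piece is immediate: summing the local bound from Lemma~\ref{lem:interpolation} (with $k=m$) over $T \in \mathcal{T}_h$ gives $|v-\Pi_h^{(m,n)}v|_{m,h} \lesssim h^s|v|_{m+s}$, using the quasi-uniformity of the mesh.

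For a jump term $h_F^{1-2(n+1)(L-l+1)}\|\llbracket\partial^{\bbeta}(v-\Pi_h^{(m,n)}v)\rrbracket\|_{0,F}^2$, I would first exploit the fact that $v \in H_0^m(\Omega)$ with $i \leq m-(n+1) < m$: on interior faces $\partial^{\bbeta}v$ has a continuous trace and so contributes no jump, while on boundary faces $\partial^{\bbeta}v$ vanishes outright (all derivatives of order at most $m-1$ of an $H_0^m$ function vanish on $\partial\Omega$). Therefore on each face
\[
\|\llbracket\partial^{\bbeta}(v-\Pi_h^{(m,n)}v)\rrbracket\|_{0,F}^2 \lesssim \sum_{T \supset F} \|\partial^{\bbeta}(v-\Pi_T^{(m,n)}v)\|_{0,F}^2.
\]
Apply the scaled trace inequality on each $T$, followed by Lemma~\ref{lem:interpolation} at levels $k=i$ and $k=i+1$, to obtain
\[
\|\partial^{\bbeta}(v-\Pi_T^{(m,n)}v)\|_{0,F}^2 \lesssim h_T^{-1}|v-\Pi_T^{(m,n)}v|_{i,T}^2 + h_T|v-\Pi_T^{(m,n)}v|_{i+1,T}^2 \lesssim h_T^{2(m+s-i)-1}|v|_{m+s,T}^2.
\]
Since $h_F \sim h_T$ by shape regularity and $1-2(n+1)(L-l+1) = 1-2(m-i)$, multiplying by $h_F^{1-2(m-i)}$ gives exactly $h_T^{2s}|v|_{m+s,T}^2$. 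Summing over faces (with each element appearing a uniformly bounded number of times) yields the desired $h^{2s}|v|_{m+s}^2$ bound for each fixed pair $(l,\bbeta)$.

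Finally, summing over the finite set of levels $l \in \{1,\dots,L\}$ and over the bounded number of multi-indices of each fixed length $i$, and combining with the broken seminorm estimate, produces
\[
\|v-\Pi_h^{(m,n)}v\|_h^2 \lesssim h^{2s}|v|_{m+s}^2,
\]
which is the claim after taking square roots. I do not anticipate a serious obstacle: the proof is entirely local, relying only on the scaled trace inequality, the interpolation error estimate of Lemma~\ref{lem:interpolation}, and the identification of the power in $h_F$ with the deficit $2(m-i)$, which is built into the definition of $\|\cdot\|_h$. The only point requiring care is checking that $\partial^{\bbeta}v$ with $|\bbeta|\le m-1$ genuinely has vanishing jumps (including on boundary faces, via $v \in H_0^m(\Omega)$), so that the jump of the error reduces to the jump of the interpolation error and the trace estimate applies cleanly.
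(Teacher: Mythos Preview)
Your proof is correct and follows essentially the same approach as the paper's: both split $\|\cdot\|_h^2$ into the broken $H^m$ seminorm (handled directly by Lemma~\ref{lem:interpolation}) and the face-jump terms (handled by the scaled trace inequality combined with Lemma~\ref{lem:interpolation} at orders $k=i$ and $k=i+1$, followed by the same power count $h_F^{1-2(m-i)}\cdot h_T^{2(m+s-i)-1}=h_T^{2s}$). The only cosmetic difference is that you invoke the vanishing of $\llbracket\partial^{\bbeta}v\rrbracket$ for $|\bbeta|\le m-1$ before bounding the jump by the one-sided traces, whereas the paper simply bounds $\|v-\Pi_T^{(m,n)}v\|_{k,\partial T}$ directly; your observation is correct but not actually needed, since the triangle inequality on the jump already gives the same one-sided bound.
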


\begin{proof}
For any $T \in \mathcal{T}_h$, by Lemma \ref{lem:interpolation}, we have
\[
|v - \Pi_T^{(m,n)}v|_{m,T} \lesssim h_T^s |v|_{m+s,T}.
\]
By the trace inequality, for $k = m - (n+1)(L - \ell + 1)$, where $1 \leq \ell \leq L$, we obtain
\[
\begin{aligned}
h_T^{1-2m+2k} \|v - \Pi_T^{(m,n)}v\|_{k, \partial T}^2 &\lesssim h_T^{1-2m+2k} \left( h_T |v - \Pi_T^{(m,n)}v|_{k+1,T}^2 + h_T^{-1} |v - \Pi_T^{(m,n)}v|_{k,T}^2 \right) \\
&\lesssim h_T^{1-2m+2k} \left( h_T^{2m + 2s - 2k - 1} |v|_{m+s,T}^2 \right) = h_T^{2s} |v|_{m+s,T}^2.
\end{aligned}
\]
This completes the proof.
\end{proof}

Based on Strang's Lemma, we have
\begin{equation} \label{equ:Strang}
\|u - u_h\|_h \lesssim \inf_{v_h \in V_h} \|u - v_h\|_h + \sup_{v_h \in V_h} \frac{|a_h(u,v_h) - (f, v_h)|}{\|v_h\|_h}.
\end{equation}
The first term on the right-hand side represents the approximation error, which can be estimated using Theorem \ref{thm:approximation}.
Next, we consider the estimate for the consistent error term.

Given $|\balpha| = m$, it can be written as $\balpha = \sum_{i=1}^m
\be_{j_{\balpha, i}}$, where $\be_i ~(i=1,\cdots, n)$ are the unit
vectors in $\mathbb{R}^n$. We also set $ \balpha_{(k)} = \sum_{i=1}^k
\be_{j_{\balpha,i}}$. 

\subsection{Error Estimate under the Extra Regularity Assumption}
In this subsection, we present the error estimate under the additional regularity assumption, namely $u \in H^r(\Omega)$, where $r = \max\{m+1, 2m-1\}$.

\begin{lemma}[consistency error I] \label{lem:nonconforming-regularity}
Let $r = \max\{m+1, 2m-1\}$. If $u \in H^r(\Omega)$ and $f \in
L^2(\Omega)$, then
\begin{equation} \label{equ:nonconforming-regularity}
\sup_{v_h \in V_h} \frac{|a_h(u, v_h) - (f, v_h)|}{\|v_h\|_{h}} \lesssim \sum_{k=1}^{r-m} h^k |u|_{m+k} + h^m \|f\|_0.
\end{equation}
\end{lemma}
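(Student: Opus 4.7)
The plan is to estimate the consistency error $\mathcal{E}(u,v_h) := a_h(u,v_h) - (f,v_h)$ by elementwise integration by parts, combined with the weak-continuity machinery encoded in Lemmas \ref{lem:face-average} and \ref{lem:k-weak-continuity}. First, I would observe that the penalty portion of $a_h(u,v_h)$ vanishes: since $u \in H^r(\Omega) \cap H_0^m(\Omega)$ with $r \ge m+1$, every jump $\llbracket \partial^\beta u \rrbracket$ of order $|\beta|\le m-1$ is zero across interior faces by trace regularity and zero on boundary faces by the homogeneous traces in $H_0^m$. Since each order $m-(n+1)(L-l+1)$ appearing in the penalty is at most $m-1$, the penalty term drops out and the task reduces to bounding $(\nabla_h^m u, \nabla_h^m v_h) - (f, v_h)$.

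Next, I would perform $m$ elementwise integrations by parts on the first term. Using $(-\Delta)^m u = f$ to cancel the resulting volume contribution against $(f, v_h)$, one arrives at an identity of the schematic form
\begin{equation*}
\mathcal{E}(u, v_h) = \sum_{j=0}^{m-1}\sum_{F\in \mathcal{F}_h} \int_F G_{j,F}(u)\cdot \llbracket\partial_h^{\alpha_j} v_h\rrbracket \;+\; R(u, v_h),
\end{equation*}
where $|\alpha_j|=j$, $G_{j,F}(u)$ is a normal-weighted $(2m-1-j)$-th partial derivative of $u$, and $R(u,v_h)$ is a volume residual that appears when the last integration by parts cannot be carried out because $u$ lies only in $H^{2m-1}$ rather than $H^{2m}$. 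The hypothesis $u\in H^{2m-1}$ is exactly enough for the face terms to be well-defined after this interruption.

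Each face integral is then bounded by one of two devices depending on whether $j$ is a \emph{penalty level}, i.e.\ $j = m-(n+1)(L-l+1)$ for some $1\le l\le L$, or not. At a penalty level, I would split the weight $h_F^{1-2(n+1)(L-l+1)}$ symmetrically and apply Cauchy--Schwarz: the $v_h$-factor absorbs directly into $\|v_h\|_h$, while the other factor $h_F^{(2(n+1)(L-l+1)-1)/2}\|G_{j,F}(u)\|_{0,F}$ is controlled via the trace inequality by scaled seminorms of $u$ on the neighbouring simplex. At a non-penalty level, Lemma \ref{lem:face-average} gives $\int_F \llbracket \partial^{\alpha_j} v_h \rrbracket = 0$, so subtracting the face-mean of $G_{j,F}(u)$ and invoking Bramble--Hilbert gains an extra factor of $h_F$; iterating the weak-continuity chain of Lemma \ref{lem:k-weak-continuity} then transports the estimate to the next penalty level or to the top order $m$, each step contributing exactly one power of $h$. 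The residual $R(u, v_h)$ is handled by pairing $f$ against $v_h$ minus a suitable piecewise-polynomial projection supplied by the Poincar\'e chain underlying Theorem \ref{thm:Poincare}; this yields $|R(u,v_h)| \lesssim h^m\|f\|_0\|v_h\|_h$ and is the source of the $h^m\|f\|_0$ contribution.

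The main technical obstacle is the combinatorial bookkeeping: one must match the order of the derivative in $G_{j,F}(u)$, the weak-continuity level of the corresponding jump of $v_h$, and the penalty scaling $h_F^{1-2(n+1)(L-l+1)}$. In particular, the weak-continuity chain must be iterated the precise number of times needed to land on either a penalty level or the top order $m$, so that the accumulated powers of $h$ line up exactly with the indices $k$ in the final estimate $\sum_{k=1}^{r-m} h^k |u|_{m+k}$ and the residual exponent on $\|f\|_0$ is precisely $h^m$.
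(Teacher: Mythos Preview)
Your outline diverges from the paper's argument in one essential respect, and that divergence hides a real gap. After the first integration by parts (the level $j=m-1$ face term, handled exactly as you say via the face-mean trick and Lemma~\ref{lem:face-average}), the paper does \emph{not} continue to generate face integrals at lower levels. Instead it invokes Lemma~\ref{lem:H1-weak-approximation}: for each multi-index $\bbeta$ with $|\bbeta|<m$ it replaces $\partial_h^{\bbeta}v_h$ by an $H_0^1$-conforming approximant $v_{\bbeta}$, so that each pair of successive volume terms (the telescoping pieces $E_2$, $E_3$) can be estimated directly as volume integrals against $\partial_h^{\bbeta}v_h - v_{\bbeta}$ and its gradient. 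No face traces of high-order derivatives of $u$ are ever needed beyond order~$m$.

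Your plan, by contrast, keeps face integrals at every level $j=0,\ldots,m-1$, and two things go wrong. First, the assertion that Lemma~\ref{lem:face-average} yields $\int_F\llbracket\partial^{\alpha_j}v_h\rrbracket=0$ at \emph{every} non-penalty level is false: for codimension-one faces that lemma only covers the orders $m-1-(n+1)(L-l)$, $0\le l\le L$, so e.g.\ for $m=4,n=2$ the level $j=2$ is non-penalty yet the face average does not vanish. Your face-mean subtraction of $G_{j,F}(u)$ is therefore unjustified at those levels. Second, and more seriously, at the bottom level $j=0$ your $G_{0,F}(u)$ is a $(2m-1)$-th derivative of $u$; under $u\in H^{2m-1}$ this lies only in $L^2(\Omega)$ and has no $L^2$ trace on $F$, so the face integral you write is not defined. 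The paper's route through Lemma~\ref{lem:H1-weak-approximation} sidesteps both issues: it never needs face-average continuity below order $m-1$, and it never needs a face trace of anything rougher than $\nabla^m u$. Your vague reference to ``the Poincar\'e chain underlying Theorem~\ref{thm:Poincare}'' is in fact pointing at exactly this lemma; you should use it from the second integration by parts onward, not only for the final residual.
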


\begin{proof}
Note that $u\in H_0^m(\Omega)$,  we first have 
$$ 
\begin{aligned}
a_h(u, v_h) - (f, v_h) &= \sum_{T\in \mathcal{T}_h} \int_T \left(
\sum_{|\balpha| = m} \partial^\balpha u \partial^\balpha v_h \right) -
(f, v_h) \\
& = \sum_{|\balpha| = m} \sum_{T\in \mathcal{T}_h} \int_T
\partial^\balpha u \partial^\balpha v_h - (-1)^m(\partial^{2\balpha}u) v_h :=
E_1 + E_2 + E_3,
\end{aligned}
$$ 
where 
$$ 
\begin{aligned}
E_1 &:= \sum_{|\balpha| = m} \sum_{T\in \mathcal{T}_h} \int_T 
\partial^\balpha u \partial^\balpha v_h + \partial^{\balpha+\balpha_{(1)}}
u \partial^{\balpha - \balpha_{(1)}} v_h, \\
E_2 &:= \sum_{k=1}^{m-2} (-1)^k \sum_{|\balpha| = m} \sum_{T\in
  \mathcal{T}_h} \int_T \partial^{\balpha + \balpha_{(k)}} u
  \partial^{\balpha - \balpha_{(k)}} v_h + \partial^{\balpha +
    \balpha_{(k+1)}}u \partial^{\balpha - \balpha_{(k+1)}} v_h, \\ 
E_3 &:= (-1)^{m-1}\sum_{|\balpha|=m} \sum_{T \in \mathcal{T}_h} \int_T
\partial^{2\balpha - \be_{j_{\balpha,m}}} u \partial^{\be_{j_{\balpha,m}}}
v_h + (\partial^{2\balpha} u) v_h.
\end{aligned}
$$ 
By Lemma \ref{lem:face-average} and Green's formula, we have   
$$ 
\begin{aligned}
E_1 &= \sum_{|\balpha|=m} \sum_{T \in \mathcal{T}_h} \int_{\partial T}
\partial^\balpha u \partial^{\balpha - \be_{j_{\balpha,1}}} v_h
\nu_{j_{\balpha},1} \\
&= \sum_{|\balpha|=m} \sum_{T\in \mathcal{T}_h} \sum_{F\subset \partial
T} \int_F \left(\partial^\balpha u - P_F^0 \partial^\balpha u \right)
(\partial_h^{\balpha - \be_{j_{\balpha,1}}}v_h - P_F^0 \partial_h^{\balpha -
 \be_{j_{\balpha,1}}}v_h)\nu_{j_{\balpha},1},
\end{aligned}
$$ 
where $P_F^0: L^2(F) \mapsto \mathcal{P}_0(F)$ is the orthogonal
projection, $\nu = (\nu_1, \cdots, \nu_n)$ is the unit outer normal to
$\partial T$. Using the Schwarz inequality and the interpolation theory, we
obtain 
\begin{equation} \label{equ:regularity-E1}
|E_1| \lesssim h|u|_{m+1} |v_h|_{m,h} \leq h|u|_{m+1} \|v_h\|_h.
\end{equation} 

When $m>1$, let $v_\bbeta \in H_0^1(\Omega)$ be the piecewise
polynomial as in Lemma \ref{lem:H1-weak-approximation} ($H^1$ weak approximation). Then, 
Green's formula leads to 
$$ 
\begin{aligned}
E_2 &= \sum_{k=1}^{m-2} (-1)^k \sum_{|\balpha| = m} \sum_{T\in
  \mathcal{T}_h} \int_T \partial^{\balpha + \balpha_{(k)}} u
  \partial^{\be_{j_{\balpha},k+1}} (\partial^{\balpha-\balpha_{(k+1)}}v_h
   - v_{\balpha-\balpha_{(k+1)}}) \\
    &+ \sum_{k=1}^{m-2} (-1)^k \sum_{|\balpha| = m} \sum_{T\in
  \mathcal{T}_h} \int_T \partial^{\balpha + \balpha_{(k+1)}}u  (
      \partial^{\balpha - \balpha_{(k+1)}} v_h -
      v_{\balpha-\balpha_{(k+1)}} ), \\ 
\end{aligned}
$$ 
which implies 
\begin{equation} \label{equ:regularity-E2}
|E_2| \lesssim \sum_{k=1}^{m-2} h^k|u|_{m+k} \|v_h\|_{h} +
h^{k+1}|u|_{m+k+1} \|v_h\|_{h}.
\end{equation}
Finally, we have 
$$
\begin{aligned}
E_3 &= (-1)^{m-1}\sum_{|\balpha|=m} \sum_{T \in \mathcal{T}_h} \int_T
\partial^{2\balpha - \be_{j_{\balpha,m}}} u \partial^{\be_{j_{\balpha,m}}}
(v_h-v_0) + (\partial^{2\balpha} u) (v_h-v_0),
\end{aligned}
$$
which gives 
\begin{equation} \label{equ:regularity-E3} 
|E_3| \lesssim h^{m-1}|u|_{2m-1}\|v_h\|_{h} + h^m \|f\|_0 \|v_h\|_{h}.
\end{equation} 
By the estimates \eqref{equ:regularity-E1}, \eqref{equ:regularity-E2},
and \eqref{equ:regularity-E3}, we obtain the desired estimate
\eqref{equ:nonconforming-regularity}.
\end{proof}

We have the following theorem.
\begin{theorem}[error estimate I] \label{thm:error-regularity}
Let $r = \max\{m+1, 2m-1\}$. If $u \in H^r(\Omega) \cap H_0^m(\Omega)$
and $f \in L^2(\Omega)$, then 
\begin{equation} \label{equ:error-regularity}
\|u - u_h\|_{h} \lesssim \sum_{k=1}^{r-m} h^k |u|_{m+k} + h^m \|f\|_0
= \mathcal{O}(h).
\end{equation}
\end{theorem}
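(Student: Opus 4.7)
The plan is to recognize that this theorem is essentially a direct assembly of three ingredients already in place: Strang's lemma (the decomposition displayed in equation \eqref{equ:Strang}), the approximation estimate of Theorem \ref{thm:approximation}, and the consistency bound of Lemma \ref{lem:nonconforming-regularity}. So the proof should be short, and the only thing to verify is that the two terms produced by Strang's lemma can each be absorbed into the right-hand side of \eqref{equ:error-regularity}.

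First I would invoke \eqref{equ:Strang} to split the energy-norm error as
$$
\|u-u_h\|_h \lesssim \inf_{v_h \in V_h} \|u-v_h\|_h + \sup_{v_h \in V_h} \frac{|a_h(u,v_h)-(f,v_h)|}{\|v_h\|_h}.
$$
Next, for the approximation part I would choose $v_h = \Pi_h^{(m,n)} u$; since $u \in H^r(\Omega) \subset H^{m+1}(\Omega)$, Theorem \ref{thm:approximation} with $s=1$ yields $\|u - \Pi_h^{(m,n)}u\|_h \lesssim h\,|u|_{m+1}$, which is exactly the $k=1$ summand in $\sum_{k=1}^{r-m} h^k|u|_{m+k}$.

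For the consistency part I would apply Lemma \ref{lem:nonconforming-regularity} directly, which gives
$$
\sup_{v_h \in V_h} \frac{|a_h(u,v_h)-(f,v_h)|}{\|v_h\|_h} \lesssim \sum_{k=1}^{r-m} h^k|u|_{m+k} + h^m\|f\|_0.
$$
Adding these two contributions produces the stated bound. The final $\mathcal{O}(h)$ statement follows by noting $h^k \lesssim h$ for $1 \le k \le r-m$ and (for the PDE of interest, $m\ge 1$) $h^m \lesssim h$, so every term is linear in $h$ up to $u$- and $f$-dependent constants.

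Since all the heavy lifting — the weak-continuity bounds, the $H^1$-conforming companion construction, and the Green's-formula splitting of $E_1,E_2,E_3$ — is done in the earlier lemmas, there is no real obstacle here; the only thing to be mildly careful about is that the hypothesis $u\in H^r$ with $r=\max\{m+1,2m-1\}$ is precisely what Lemma \ref{lem:nonconforming-regularity} requires, so no extra regularity manipulation is needed.
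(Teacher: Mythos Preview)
Your proposal is correct and matches the paper's approach exactly: the paper states that Theorem \ref{thm:error-regularity} follows directly from Lemma \ref{lem:nonconforming-regularity}, with Strang's Lemma \eqref{equ:Strang} and Theorem \ref{thm:approximation} supplying the approximation part. There is nothing to add.
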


\begin{remark}[consistency with existing results]
For the cases where $m \leq n$, we have $\|\cdot\|_h =
|\cdot|_{m,h}$. Thus, Theorem \ref{thm:error-regularity} is consistent
with the error estimate for the Morley element (cf. \cite{shi1990error}) and
the main theorem in \cite[Theorem 3.4]{wang2013minimal}. 
\end{remark}

%%%%%%%%%%%%%%%%%%%%%%%%%%%%%%%%%%%%%%%%%%%%%%%%%% 
%% Conforming relatives 
%%%%%%%%%%%%%%%%%%%%%%%%%%%%%%%%%%%%%%%%%%%%%%%%%%
\subsection{Error estimate by conforming relatives}

In this subsection, we will provide error estimates for the interior penalty nonconforming finite element methods under the minimum regularity requirement, by constructing enriching operators to the $H^m$ conforming finite element space. In fact, this technique has been widely used in the analysis of nonconforming elements: see \cite{scott1990finite, brenner2003poincare} for
the case in which $m=1$, \cite{brenner2005c0, li2014new} for $m=2$ in 2D, \cite{zhang2009family} for $m=2$ in 3D, and \cite{hu2014new, hu2016canonical} for arbitrary $m \geq 1$ in 2D. 

However, the existence of conforming finite element spaces on simplex meshes in arbitrary dimension or arbitrary order has long been a missing piece in the rigorous analysis of this technique. Only recently, Hu, Lin, and Wu in \cite{hu2023construction} provided the final piece, completing this technique. Specifically, \cite{hu2023construction} gives an $H^m$ conforming finite element space on $\mathcal{T}_h$, which we denote as $V_h^c \subset H_0^m(\Omega)$. The shape function space is denoted by $P_T^c$, and the global degrees of freedom are denoted by $D^c$.

\begin{remark}
The definition of conforming relatives used here differs slightly from those in Brenner \cite{brenner1996two} and Brenner and Sung \cite{brenner2005c0}, where the conditions $P_T^{(m,n)} \subset P_T^c$ and $D_T^{(m,n)} \subset D^c|_T$ are imposed. First, since $P_T^{(m,n)} = \mathcal{P}_m(T)$ is minimal, the inclusion $P_T^{(m,n)} \subset P_T^c$ always holds. Furthermore, as demonstrated below, the analysis in this subsection does not rely on the condition $D_T^{(m,n)} \subset D^c|_T$.
\end{remark}

A crucial component in the analysis of interior penalty nonconforming methods is the construction of the operator $\Pi_h^c: V_h \to V_h^c$, often referred to as the enriching operator in \cite{brenner2005c0, gudi2011interior}. The operator $\Pi_h^c$ is defined as follows: For any $d \in D^c$,  
\begin{equation} \label{eq:Pic}  
d(\Pi_h^c v_h) := \left\{   
\begin{aligned}  
& 0 \qquad\qquad\qquad\qquad~ \text{if } d(w) = 0 \text{ for all } w \in H_0^m(\Omega), \\   
& \frac{1}{|\mathcal{T}_{d}|} \sum_{T \in \mathcal{T}_{d}} d(v_h|_T) \qquad \text{otherwise}.  
\end{aligned}  
\right.  
\end{equation}  
Here, $\mathcal{T}_{d} \subset \mathcal{T}_h$ represents the set of simplices sharing the degree of freedom $d$, and $|\mathcal{T}_d|$ denotes the cardinality of this set. Additionally, the highest order of derivatives in $D^c$ is denoted by $M$.  

\begin{lemma}[enriching operator] \label{lem:conforming-relatives}
For $\Pi_h^c$ defined in \eqref{eq:Pic}, the following inequality holds:
\begin{equation} \label{equ:relative-enriching}
\sum_{j=0}^{m-1} h^{2(j-m)} |v_h - \Pi_h^c v_h|_{j,h}^2 + |\Pi_h^c
v_h|_{m,h}^2 \lesssim \|v_h\|_h^2, \qquad \forall v_h \in V_h. 
\end{equation}
\end{lemma}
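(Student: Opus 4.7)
The plan is to follow the standard enriching-operator analysis: bound $v_h - \Pi_h^c v_h$ on each element by a linear combination of differences of conforming degrees of freedom, express those differences as short chains of jumps of $\partial^{\balpha} v_h$ across faces of $\mathcal{T}_h$, and close the argument using Lemma \ref{lem:jump-estimate}. The bound on $|\Pi_h^c v_h|_{m,h}$ will then follow from the triangle inequality once the $j=m$ case of $|v_h - \Pi_h^c v_h|_{j,h}$ is in hand.

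First I would fix $T \in \mathcal{T}_h$, expand $(v_h - \Pi_h^c v_h)|_T$ in the nodal basis $\{\phi_{d,T}\}$ of $P_T^c$, and use a standard scaling argument to show that if $d$ involves a derivative of order $\ell_d$ (with $\ell_d \le M$), then $|\phi_{d,T}|_{j,T}^2 \lesssim h_T^{n+2\ell_d-2j}$, giving
$$|v_h - \Pi_h^c v_h|_{j,T}^2 \lesssim \sum_{d\in D^c|_T} h_T^{\,n+2\ell_d-2j}\,|d(v_h|_T) - d(\Pi_h^c v_h)|^2.$$
Second, the averaging definition $d(\Pi_h^c v_h) = |\mathcal{T}_d|^{-1}\sum_{T'\in\mathcal{T}_d} d(v_h|_{T'})$ yields
$$d(\Pi_h^c v_h) - d(v_h|_T) = \frac{1}{|\mathcal{T}_d|}\sum_{T'\in\mathcal{T}_d}\bigl(d(v_h|_{T'})-d(v_h|_T)\bigr),$$
and each interior term can be written as a uniformly bounded telescoping sum of jumps $[\partial^{\balpha}v_h](x)$ with $|\balpha|=\ell_d$ (or, if $d$ is a face integral, of integrals of such jumps) along the path in $\mathcal{T}_d$ from $T$ to $T'$; boundary DOFs, whose value vanishes by $V_h^c\subset H_0^m(\Omega)$, are handled analogously using the $k$-weak zero-boundary content of $V_{h0}^{(m,n)}$ provided by Lemma \ref{lem:face-average}.

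Third, summing over $T$ and re-indexing as a sum over faces (each face participates in $O(1)$ clusters), the estimate reduces to
$$|v_h - \Pi_h^c v_h|_{j,h}^2 \lesssim \sum_{\ell=0}^{M}\ \sum_{F\in\mathcal{F}_h} h_F^{\,n+2\ell-2j}\ \max_{y\in F}\sum_{|\balpha|=\ell}|[\partial^{\balpha}v_h](y)|^2.$$
Applying the squared form of Lemma \ref{lem:jump-estimate} (which is what its proof actually delivers) bounds each face sum by $h^{2(m-\ell)-n}\|v_h\|_h^2$, and the geometric powers telescope to the $\ell$-independent factor $h^{2(m-j)}$, so $|v_h-\Pi_h^c v_h|_{j,h}^2 \lesssim h^{2(m-j)}\|v_h\|_h^2$. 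Weighting by $h^{2(j-m)}$ and summing over $j=0,\ldots,m-1$ gives the first term of \eqref{equ:relative-enriching}; the same bound at $j=m$, together with $|\Pi_h^c v_h|_{m,h}\le |v_h|_{m,h}+|v_h-\Pi_h^c v_h|_{m,h}$ and $|v_h|_{m,h}\le\|v_h\|_h$ from \eqref{equ:norm-mn}, yields the second term.

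The main obstacle is the bookkeeping in step two: to verify that, for every conforming DOF $d$ of order $\ell_d$, the difference $d(v_h|_T)-d(v_h|_{T'})$ with $T,T'\in\mathcal{T}_d$ can indeed be written as an $O(1)$-term sum of jumps of $\ell_d$-th derivatives on faces adjacent to the path from $T$ to $T'$. This works cleanly for standard $C^{m-1}$ conforming relatives because shape-regularity forces $|\mathcal{T}_d|$ to be uniformly bounded and each step crosses a single face of $\mathcal{T}_h$; the only genuinely subtle case is when $d$ lives on a sub-simplex of codimension greater than one (a vertex or edge), where the chain requires several hops but, again by shape-regularity, only a uniformly bounded number. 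Once this combinatorial fact is in place, the remainder of the proof is a direct assembly of the three steps above.
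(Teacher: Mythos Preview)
Your approach is essentially the same as the paper's: the paper also reduces $\|v_h-\Pi_h^c v_h\|_{0}^2$ to the face-indexed sum $\sum_{i=0}^M\sum_{F}h_F^{2i+n}\max_{y\in F}|[\partial^{\balpha}v_h](y)|^2$, bounds it by $h^{2m}\|v_h\|_h^2$, and then recovers the higher $|\cdot|_{j,h}$ seminorms by the inverse inequality. One small point to fix: Lemma~\ref{lem:jump-estimate} is stated only for $i<m$, so for conforming degrees of freedom of order $\ell\ge m$ (which can occur, since $M$ may exceed $m-1$) you cannot literally invoke it; the paper handles that case separately by the trivial inverse-inequality bound $\sum_F h_F^{2\ell+n}\max_F|[\partial^{\balpha}v_h]|^2\lesssim h^{2m}|v_h|_{m,h}^2$.
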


\begin{proof}
The proof is based on arguments similar to those in \cite{brenner1996two,
brenner2005c0, brenner2009posteriori, gudi2011interior}. The main distinction lies in the need to estimate certain jump terms. Below, we provide only a brief outline of the proof. 

First, using the standard scaling argument, we have 
\begin{equation} \label{equ:enriching-scaling} 
\sum_{T\in \mathcal{T}_h} \|v_h - \Pi_h^c v_h\|_{0,T}^2 \lesssim
\sum_{i=0}^M \sum_{F\in \mathcal{F}_h }h_F^{2i+n}
\sum_{|\balpha|=i}\max_{y\in F}|[\partial_h^{\balpha}v_h](y)|^2. 
\end{equation} 
We estimate the right-hand side of \eqref{equ:enriching-scaling} in two cases:

\begin{enumerate}
\item $i \geq m$: Using the standard scaling argument and the inverse inequality, we have
$$ 
\sum_{F\in \mathcal{F}_h} h_F^{2i+n}\sum_{|\balpha|=i} \max_{y\in
  F}|[\partial^\balpha v_h](y)|^2 \lesssim h^{2m} |v_h|_{m}^2 \leq
  h^{2m} \|v_h\|_{h}^2.
$$ 

\item $i < m$: By applying Lemma \ref{lem:jump-estimate} (jump estimate), it follows that
$$ 
\sum_{F\in \mathcal{F}_h} h_F^{2i+n}\sum_{|\balpha|=i}\max_{y\in
F}|[\partial^{\balpha}_h v_h](y)|^2 \lesssim 
h^{2i+n} h^{2m-2i-n} \|v_h\|_h^2 = h^{2m} \|v_h\|_h^2.
$$ 
\end{enumerate}

Combining the results of both cases, we conclude that  
$$ 
\|v_h - \Pi_h^c v_h\|_{0}^2 \lesssim h^{2m}\|v_h\|_h^2.
$$ 
The remaining terms can be bounded using the inverse inequality, completing the proof.
\end{proof}

Let $\mathcal{P}_0(\mathcal{T}_h)$ denote the space of piecewise constant functions over the mesh $\mathcal{T}_h$. To establish the quasi-optimal error estimate under minimal regularity assumption, we first define the piecewise constant projection $P_h^0: L^2(\Omega) \to \mathcal{P}_0(\mathcal{T}_h)$ as follows:  
\begin{equation} \label{equ:constant-proj}
P_h^0 v|_T := \frac{1}{|T|} \int_T v, \qquad \forall T \in \mathcal{T}_h.
\end{equation}  
Additionally, we introduce the average operator over $\omega_F$ as:  
\begin{equation} \label{equ:average-omega}
P_{\omega_F}^0v := \frac{1}{|\omega_F|} \int_{\omega_F} v.
\end{equation}

\begin{lemma}[consistency error II] \label{lem:nonconforming-no-regularity}
If $f \in L^2(\Omega)$, then we have 
\begin{equation} \label{equ:nonconforming-no-regularity}
\begin{aligned}
\sup_{v_h \in V_h} \frac{|a_h(u, v_h) - \langle f, v_h \rangle
|}{\|v_h\|_{h}} &\lesssim \inf_{w_h\in V_h} \|u-w_h\|_{h} +
h^m \|f\|_0 \\
&+ \sum_{|\balpha|=m} \left(\|\partial^\balpha u - P_h^0
    \partial^\balpha u\|_0 + \sum_{F\in \mathcal{F}_h}
    \|\partial^\balpha u - P_{\omega_F}^0 \partial^\balpha
    u\|_{0,\omega_F} \right). 
\end{aligned}
\end{equation}
\end{lemma}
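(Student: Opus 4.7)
The plan is to exploit the enriching operator $\Pi_h^c : V_h \to V_h^c \subset H_0^m(\Omega)$ constructed from Assumption \ref{asm:conforming-relatives}. Since $u \in H_0^m(\Omega)$, all jump terms in $a_h(u, \cdot)$ vanish, so $a_h(u, v_h) = \sum_{|\balpha|=m}(\partial^\balpha u, \partial_h^\balpha v_h)$. Since $\Pi_h^c v_h \in H_0^m(\Omega)$ is admissible as a test function in the original variational problem, $a(u, \Pi_h^c v_h) = (f, \Pi_h^c v_h)$. Setting $\xi_h := v_h - \Pi_h^c v_h$, subtracting yields
$$
a_h(u, v_h) - (f, v_h) = \sum_{|\balpha|=m}(\partial^\balpha u, \partial_h^\balpha \xi_h) - (f, \xi_h).
$$
From Lemma \ref{lem:conforming-relatives} I immediately obtain $\|\xi_h\|_0 \lesssim h^m \|v_h\|_h$, so $|(f, \xi_h)| \lesssim h^m \|f\|_0 \|v_h\|_h$, producing the $h^m\|f\|_0$ contribution. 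For each $|\balpha|=m$ I decompose $\partial^\balpha u = P_h^0 \partial^\balpha u + (\partial^\balpha u - P_h^0 \partial^\balpha u)$, and apply Cauchy--Schwarz to the non-constant part together with the bound $|\xi_h|_{m,h}\lesssim \|v_h\|_h$ from Lemma \ref{lem:conforming-relatives} to produce the $\|\partial^\balpha u - P_h^0\partial^\balpha u\|_0$ contribution.

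The hard part is to bound $\sum_T (P_h^0\partial^\balpha u)|_T \int_T \partial^\balpha \xi_h$, since this pairs a piecewise constant against an $m$-th order derivative of a piecewise polynomial. I apply elementwise Green's formula, transferring the derivative $\partial_{j_{\balpha,m}}$ onto the constants (which annihilates them) and producing face integrals of $\partial_h^{\balpha-\be_{j_{\balpha,m}}}\xi_h$ weighted by the elementwise constants. On each face $F$ I introduce the single constant $P_{\omega_F}^0 \partial^\balpha u$, which is common to both sides of $F$, and split $(P_h^0\partial^\balpha u)|_{T^\pm} = P_{\omega_F}^0\partial^\balpha u + \bigl((P_h^0\partial^\balpha u)|_{T^\pm} - P_{\omega_F}^0\partial^\balpha u\bigr)$. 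The $P_{\omega_F}^0$-piece assembles into $P_{\omega_F}^0\partial^\balpha u \int_F \llbracket \partial_h^{\balpha-\be_{j_{\balpha,m}}} v_h \rrbracket \cdot \be_{j_{\balpha,m}}$, where the $\Pi_h^c v_h$ contribution cancels because $\Pi_h^c v_h$ has continuous $(m-1)$-th derivatives; this vanishes on interior faces by the weak continuity at order $m-1$ from Lemma \ref{lem:face-average} (taking $k=1$, $l=L$, for which $|\balpha|-1 = m-1-(n+1)(L-L)$), and on boundary faces by the weak zero-boundary identity \eqref{equ:face-average0} combined with $\Pi_h^c v_h \in H_0^m(\Omega)$. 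The residual $(P_h^0\partial^\balpha u - P_{\omega_F}^0\partial^\balpha u)|_{T^\pm}$ is a constant of magnitude at most $|T^\pm|^{-1/2}\|\partial^\balpha u - P_{\omega_F}^0\partial^\balpha u\|_{0,T^\pm}$, and combined with a trace inequality applied to $\partial_h^{\balpha-\be_{j_{\balpha,m}}}\xi_h$ and the bounds $|\xi_h|_{m-1,h}\lesssim h\|v_h\|_h$ and $|\xi_h|_{m,h}\lesssim \|v_h\|_h$ from Lemma \ref{lem:conforming-relatives}, yields exactly the $\sum_F\|\partial^\balpha u - P_{\omega_F}^0\partial^\balpha u\|_{0,\omega_F}$ term in \eqref{equ:nonconforming-no-regularity}.

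The main technical obstacle is identifying the correct cancellation, namely that the weak continuity of $v_h$ at order $m-1$ across faces is always present (via the level $l=L$, codimension $k=1$ entry of Lemma \ref{lem:face-average}); this is precisely what is needed to annihilate the $P_{\omega_F}^0$-average piece after integration by parts, leaving only small residuals. The $\inf_{w_h\in V_h}\|u-w_h\|_h$ term in the stated bound is included for compatibility with Strang's estimate \eqref{equ:Strang} and can be absorbed via a triangle inequality at the end of the argument if desired; the sharper decomposition above does not require it.
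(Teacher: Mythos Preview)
Your proof is correct and follows essentially the same route as the paper: both use the enriching operator $\Pi_h^c$ to reduce to $a_h(u,\xi_h)-(f,\xi_h)$ with $\xi_h=v_h-\Pi_h^c v_h$, perform one integration by parts, and exploit the weak continuity of the $(m-1)$-th derivatives (Lemma~\ref{lem:face-average} with $l=L$, $k=1$) to annihilate the constant-against-jump term after subtracting $P_{\omega_F}^0\partial^\balpha u$. The only organizational difference is that the paper first inserts an auxiliary $w_h\in V_h$ and writes $a_h(u,\xi_h)=a_h(u-w_h,\xi_h)+a_h(w_h,\xi_h)$, so that $\partial_h^\balpha w_h$ is already piecewise constant; this is where the $\inf_{w_h}\|u-w_h\|_h$ term and the penalty remainder $E_3$ arise. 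Your direct decomposition $\partial^\balpha u = P_h^0\partial^\balpha u + (\partial^\balpha u - P_h^0\partial^\balpha u)$ bypasses both, and your remark that the $\inf_{w_h}$ term is superfluous is correct.
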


\begin{proof}
For any $w_h \in V_h$,
$$ 
\begin{aligned}
a_h(u, v_h) - (f,v_h) &= a_h(u, v_h - \Pi_h^c v_h) - (f, v_h - \Pi_h^c
v_h) \\
&= a_h(u - w_h, v_h - \Pi_h^c v_h) + a_h(w_h, v_h - \Pi_h^c v_h) - (f,
v_h - \Pi_h^c v_h)
\end{aligned}
$$ 
For the first and third terms, we have 
\begin{equation} \label{equ:no-regularity1} 
\begin{aligned}
|a_h(u-w_h, v_h - \Pi_h^c v_h)| &\lesssim \|u-w_h\|_{h} \|v_h -
\Pi_h^c v_h\|_{h} \lesssim \|u-w_h\|_{h} \|v_h\|_{h}, \\
|(f, v_h - \Pi_h^c v_h)| & \lesssim \|f\|_{0} \|v_h - \Pi_h^c
v_h\|_0 \lesssim h^m\|f\|_{0} \|v_h\|_{h}.
\end{aligned}
\end{equation}
Next, we estimate the second term. First,
$$ 
\begin{aligned}
a_h(w_h, v_h - \Pi_h^c v_h) &= \sum_{|\balpha|=m}\sum_{T\in
  \mathcal{T}_h} \int_T \partial^\balpha w_h \partial^\balpha (v_h -
      \Pi_h^c v_h) \\ 
&~~ + \eta \sum_{l=1}^L\sum_{F\in \mathcal{F}_{h}}
h_F^{1-2(n+1)(L-l+1)} \int_F \sum_{|\bbeta|=m-(n+1)(L-l+1)}
\llbracket \partial_h^{\bbeta}w_h \rrbracket \cdot \llbracket \partial_h^{\bbeta}v_h\rrbracket \\
&:= E_1 + E_2 + E_3, 
\end{aligned}
$$ 
where
$$ 
\begin{aligned}
E_1 &:= \sum_{|\balpha| = m} \sum_{T\in \mathcal{T}_h} \int_T 
\partial^\balpha w_h \partial^\balpha (v_h-\Pi_h^cv_h) +
\partial^{\balpha+\be_{j_{\balpha},1}} w_h \partial^{\balpha -
  \be_{j_{\balpha},1}}
(v_h - \Pi_h^c v_h), \\
E_2 &:= -\sum_{|\balpha| = m} \sum_{T\in
  \mathcal{T}_h} \int_T \partial^{\balpha + \be_{j_\balpha,1}}w_h
  \partial^{\balpha - \be_{j_\balpha,1}} (v_h-\Pi_h^c v_h), \\
E_3 &:= \eta \sum_{l=1}^L \sum_{F\in
  \mathcal{F}_h} h_F^{1-2(n+1)(L-l+1)} \int_F
  \sum_{|\bbeta|=m-(n+1)(L-l+1)}
 \llbracket \partial_h^{\bbeta} w_h \rrbracket \cdot \llbracket \partial_h^{\bbeta} v_h \rrbracket.
\end{aligned}
$$ 
First we have 
$$ 
E_3 = \eta \sum_{l=1}^L \sum_{F\in
\mathcal{F}_h} h_F^{1-2(n+1)(L-l+1)} \int_F
\sum_{|\bbeta|=m-(n+1)(L-l+1)} \llbracket \partial_h^{\bbeta} (u-w_h) \rrbracket \cdot
\llbracket \partial_h^{\bbeta} v_h \rrbracket.
$$ 
which means that 
\begin{equation} \label{equ:no-regularity-E3}
|E_3| \lesssim \|u-w_h\|_h \|v_h\|_h.
\end{equation}

By Lemma \ref{lem:face-average} and Green's formula, we have   
$$ 
\begin{aligned}
E_1 &= \sum_{|\balpha|=m} \sum_{T \in \mathcal{T}_h} \int_{\partial T}
\partial_h^\balpha w_h \partial_h^{\balpha - \be_{j_\balpha,1}} (v_h -
\Pi_h^c v_h) \nu_{j_{\balpha,1}}\\
&= \sum_{|\balpha|=m} \sum_{F\in \mathcal{F}_h} \int_F
\{\partial_h^\balpha w_h\}\llbracket \partial_h^{\balpha - \be_{j_\balpha,1}} (v_h -
    \Pi_h^c v_h) \rrbracket_{j_{\balpha,1}} \\
&+ \sum_{|\balpha|=m} \sum_{F\in \mathcal{F}_h^i} \int_F
\llbracket\partial_h^\balpha w_h\rrbracket_{j_{\balpha,1}} 
\{\partial_h^{\balpha-\be_{j_\balpha,1}} (v_h - \Pi_h^c v_h)\} \qquad
(\text{Eq. (3.3) in \cite{arnold2002unified}})\\
&= \sum_{|\balpha|=m} \sum_{F\in \mathcal{F}_h} \int_F
\{\partial_h^\balpha w_h - P_h^0 \partial^\balpha u\}\llbracket
\partial_h^{\balpha - \be_{j_\balpha,1}} (v_h - \Pi_h^c
    v_h)\rrbracket_{j_{\balpha,1}} \\
&+ \sum_{|\balpha|=m} \sum_{F\in \mathcal{F}_h^i} \int_F \llbracket \partial_h^\balpha
w_h - P_{\omega_F}^0 \partial^\balpha u\rrbracket_{j_{\balpha,1}}
\{\partial_h^{\balpha-\be_{j_\balpha,1}} (v_h - \Pi_h^c v_h)\}.
\end{aligned}
$$ 
Therefore, it follows from the trace inequality and inverse inequality
that 
\begin{equation}\label{equ:no-regularity-E1}
\begin{aligned}
|E_1| &\lesssim \sum_{|\balpha|=m} \sum_{F\in \mathcal{F}_h} h_F^{-1}
\|\partial_h^\balpha w_h - P_h^0 \partial^\balpha u\|_{0,\omega_F} |v_h
- \Pi_h^c v_h|_{m-1,h} \\
&+ \sum_{|\balpha|=m} \sum_{F\in \mathcal{F}_h}h_F^{-1} 
\|\partial_h^\balpha w_h - P_{\omega_F}^0 \partial^\balpha
u\|_{0,\omega_F} |v_h - \Pi_h^c v_h|_{m-1,h} \\
&\lesssim \sum_{|\balpha|=m} \left(\|\partial_h^\balpha w_h - P_h^0
\partial^\balpha u\|_0 + \sum_{F\in \mathcal{F}_h} \|\partial_h^\balpha w_h
- P_{\omega_F}^0 \partial^\balpha u\|_{0,\omega_F} 
\right) \|v_h\|_{h}.
\end{aligned}
\end{equation}

For the estimate of $E_2$, we obtain
$$ 
\begin{aligned}
E_2 &= -\sum_{|\balpha| = m} \sum_{T\in \mathcal{T}_h} \int_T
\partial^{\be_{j_\alpha,1}}(\partial^{\balpha}w_h - P_h^0
  \partial^{\balpha} u) \partial^{\balpha - \be_{j_\alpha,1}}(v_h -
    \Pi_h^c v_h), 
\end{aligned}
$$
which gives 
\begin{equation} \label{equ:no-regularity-E2} 
\begin{aligned}
|E_2| &\lesssim \sum_{|\balpha|=m} h^{-1}
\|\partial_h^\balpha w_h - P_h^0 \partial^\balpha u\|_0 |v_h - \Pi_h^c
v_h|_{m-1,h} \\
&\lesssim \sum_{|\balpha|=m} \|\partial_h^\balpha w_h -
P_h^0\partial^\balpha u\|_0 \|v_h\|_{h}. 
\end{aligned}
\end{equation} 
We therefore complete the proof by \eqref{equ:no-regularity1}, \eqref{equ:no-regularity-E3},
\eqref{equ:no-regularity-E1}, \eqref{equ:no-regularity-E2}, and the
triangle inequality.
\end{proof}

Using Lemma \ref{lem:nonconforming-no-regularity} (consistency error) and interpolation property, we immediately obtain the following theorem:
\begin{theorem}[error estimate II] \label{thm:error-no-regularity}
If $f \in L^2(\Omega)$ and $u \in H^{m+t}(\Omega)$, then the following holds:
\begin{equation}\label{equ:error-no-regularity}
\|u - u_h\|_{h} \lesssim h^{s}|u|_{m+s} + h^m \|f\|_0,
\end{equation}
where $s = \min\{1, t\}$.
\end{theorem}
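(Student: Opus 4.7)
The plan is to combine Strang's Lemma \eqref{equ:Strang} with the consistency bound already established in Lemma \ref{lem:nonconforming-no-regularity} and the interpolation bound in Theorem \ref{thm:approximation}. Since the well-posedness (Theorem \ref{thm:well-posedness}) yields coercivity of $a_h$ on $V_h$, Strang's framework reduces the problem to estimating two pieces: the best approximation error $\inf_{v_h\in V_h} \|u - v_h\|_h$ and the consistency error $\sup_{v_h\in V_h} |a_h(u,v_h)-(f,v_h)|/\|v_h\|_h$.

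For the approximation piece, I would simply take $v_h = \Pi_h^{(m,n)} u$ and invoke Theorem \ref{thm:approximation} with the exponent $s = \min\{1,t\}$, giving $\|u - \Pi_h^{(m,n)}u\|_h \lesssim h^s |u|_{m+s}$. The same choice $w_h = \Pi_h^{(m,n)}u$ feeds into the first term on the right-hand side of \eqref{equ:nonconforming-no-regularity}, so this part of the consistency bound inherits the same $h^s|u|_{m+s}$ order.

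For the remaining terms in \eqref{equ:nonconforming-no-regularity}, the task is to control $\|\partial^\balpha u - P_h^0 \partial^\balpha u\|_0$ and $\|\partial^\balpha u - P_{\omega_F}^0 \partial^\balpha u\|_{0,\omega_F}$ for $|\balpha|=m$. These are standard piecewise-$L^2$ projection estimates: a Bramble-Hilbert / scaling argument on each shape-regular $T$ (resp.\ on each patch $\omega_F$, whose diameter is comparable to $h_F$ by shape-regularity and quasi-uniformity) gives $\|v - P_h^0 v\|_{0,T} \lesssim h_T^s |v|_{s,T}$ and $\|v - P_{\omega_F}^0 v\|_{0,\omega_F} \lesssim h_F^s |v|_{s,\omega_F}$ for any $s\in[0,1]$. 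Summing over $T$ and over $F$ and applying the finite-overlap property of the patches $\{\omega_F\}$ produces a global bound of the form $h^s |\partial^\balpha u|_s \lesssim h^s |u|_{m+s}$. The term $h^m\|f\|_0$ is already in the desired form.

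Collecting the three contributions and invoking Strang's Lemma yields \eqref{equ:error-no-regularity}. The only real obstacle is bookkeeping: one must check that the Bramble-Hilbert estimate on the patches $\omega_F$ is uniform with constants depending only on shape-regularity, and that the overlap count of $\{\omega_F\}$ is bounded, so the sum over faces collapses cleanly into a global $|u|_{m+s}$ seminorm. Everything else is a direct assembly of previously established lemmas, so no new analytical ingredient is needed.
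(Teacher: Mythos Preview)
Your proposal is correct and is exactly the argument the paper has in mind: the paper simply states that Theorem \ref{thm:error-no-regularity} follows from Lemma \ref{lem:nonconforming-no-regularity}, leaving the combination with Strang's Lemma \eqref{equ:Strang}, Theorem \ref{thm:approximation}, and the standard Bramble--Hilbert bounds for $P_h^0$ and $P_{\omega_F}^0$ implicit. Your write-up supplies precisely those omitted details, including the finite-overlap bookkeeping for the patches $\omega_F$, so nothing further is needed.
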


\begin{remark}
We note that the minimal regularity is required in Lemma
\ref{lem:nonconforming-no-regularity} and Theorem
\ref{thm:error-no-regularity}. Similar technique can be found in
\cite{mao2010error,hu2014new,hu2016canonical,wu2017nonconforming}.
\end{remark}

%%%%%%%%%%%%%%%%%%%%%%%%%%%%%%%%%%%%%%%%%%%%%%%%%% 
%% Numerical tests 
%%%%%%%%%%%%%%%%%%%%%%%%%%%%%%%%%%%%%%%%%%%%%%%%%%

\section{Numerical Tests} \label{sec:numerical}
In this section, we present several numerical results to support the theoretical findings established in Section \ref{sec:convergence}. Note that since it suffices to have $\eta = \mathcal{O}(1)$, we simply set $\eta = 1$.

%%\subsection{$m=2, n=1$}
%%1D case, the basis  function is defined by 
%%$$ 
%%\begin{aligned}
%%\varphi_1 &= \frac{h}{6} (1-3\lambda_1^2) \\
%%\varphi_2 &= 1 \\
%%\varphi_3 &= \frac{h}{6} (-1+3\lambda_2^2)
%%\end{aligned}
%%$$ 
%%The bilinear form is defined by 
%%$$ 
%%(\frac{d^2}{dx^2} u_h, \frac{d^2}{dx^2} v_h) + \eta h^{-3}
%%  \sum_{a} [u_h][v_h] = (f, v_h) 
%%$$ 

\subsection{Smooth solution when $m=3, n=2$}
In this example, we set $f=0$, resulting in the exact solution $u = \exp(\pi y) \sin(\pi x)$ on $\Omega = (0,1)^2$, with nonhomogeneous boundary conditions. By solving \eqref{equ:IP-mn} for various values of $h$, we compute the errors and orders of convergence in borken $H^k$ norms for $k=0,1,2,3$, and summarize the results in Table \ref{tab:2D-example1}. 

The table demonstrates that the computed solution converges linearly to the exact solution in the $H^3$ norm, which aligns with the theoretical predictions in Theorem \ref{thm:error-regularity} (error estimate I) and Theorem \ref{thm:error-no-regularity} (error estimate II).

\begin{table}[!htbp]
\caption{Example 1: Errors and observed convergence orders.}
\centering
{\small{
\begin{tabular}{@{}c|cc|cc|cc|cc@{}}
   \hline
  $1/h$	&$\|u-u_h\|_0$	& Order	& 
  $|u-u_h|_{1,h}$ & Order & 
  $|u-u_h|_{2,h}$	& Order & $|u-u_h|_{3,h}$ & Order\\ \hline
  8		&2.1388e-2 &--	  &2.8269e-1 &--	  &2.4606e+0 &--   &8.5726e+1	&--  \\
  16	&3.7707e-3 &2.50	&4.4020e-2 &2.68	&5.9908e-1 &2.04 &4.2855e+1	&1.00\\
  32	&9.8025e-4 &1.94	&6.6082e-3 &2.74	&1.4438e-1 &2.05 &2.1369e+1	&1.00\\
  64	&2.7203e-4 &1.85	&1.5666e-3 &2.08	&3.6289e-2 &1.99 &1.0687e+1	&1.00\\
  \hline
\end{tabular}}}
\label{tab:2D-example1}
\end{table}

\begin{figure}[!htbp]
\caption{Uniform grids for Example 1 and Example 2.}
\label{fig:uniform-grids}
\centering 
%\captionsetup{justification=centering}
\subfloat[Example 1: Unit square domain]{\centering 
   \includegraphics[width=0.3\textwidth]{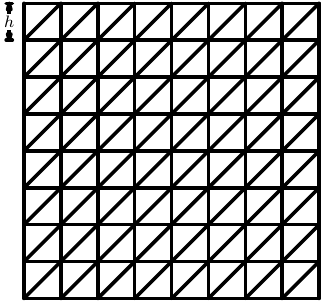} 
   \label{fig:square}
}% 
\qquad\qquad  
\subfloat[Example 2: L-shaped domain]{\centering 
   \includegraphics[width=0.3\textwidth]{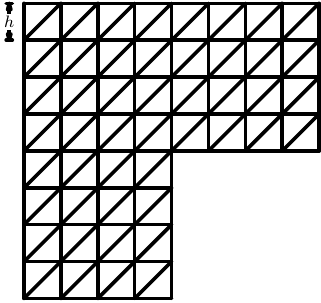} 
   \label{fig:L-shaped}
}
\end{figure}

%% Solution in H^{3+1/2} on L-shaped domain 
\subsection{L-shaped domain when $m=3, n=2$}
In this example, we evaluate the method for a case where the solution exhibits partial regularity on a non-convex domain. Specifically, we solve the tri-harmonic equation 
$$ 
(-\Delta)^3 u = 0
$$ 
on the 2D L-shaped domain $\Omega = (-1,1)^2 \setminus [0,1) \times (-1,0]$, as illustrated in Figure \ref{fig:L-shaped}. The exact solution is given by 
$$ 
u = r^{2.5} \sin(2.5\theta),
$$ 
where $(r,\theta)$ are polar coordinates. 

Due to the singularity at
the origin, the solution $u \in H^{3+1/2-\epsilon}(\Omega)$ for any $\epsilon > 0$. As shown in Table \ref{tab:2D-example2}, the method does
converge with the optimal order $h^{1/2}$ in the broken $H^3$
norm.

\begin{table}[!htbp]
\caption{Example 2: Errors and observed convergence orders.}
\centering
{\small{
\begin{tabular}{@{}c|cc|cc|cc|cc@{}}
   \hline
  $1/h$	&$\|u-u_h\|_0$	& Order	& 
  $|u-u_h|_{1,h}$ & Order & 
  $|u-u_h|_{2,h}$	& Order & $|u-u_h|_{3,h}$ & Order\\ \hline
  4		& 1.7045e-3 &--	  &1.3625e-2 &--	  &8.2377e-2 &--   &1.3897e+0	&--  \\
  8		& 5.1684e-4 &1.72	&3.2787e-3 &2.06	&3.1536e-2 &1.39 &1.0045e+0	&0.47\\
  16	& 2.0898e-4 &1.31	&1.1035e-3 &1.57	&1.2527e-2 &1.33 &7.1864e-1	&0.48\\
  32	& 9.0652e-5 &1.21	&4.7740e-4 &1.21	&5.1739e-3 &1.28 &5.1110e-1	&0.49\\
  64	& 4.0534e-5 &1.16	&2.1548e-4 &1.15	&2.1951e-3 &1.24 &3.6240e-1	&0.50\\
  \hline
\end{tabular}}}
\label{tab:2D-example2}
\end{table}

\section{Concluding remarks} \label{sec:concluding}
In this paper, we propose and analyze interior penalty nonconforming finite element methods for $2m$-th order partial differential equations in arbitrary dimensions. The nonconforming finite elements are minimal and consistent with the Morley-Wang-Xu elements when $1 \leq m \leq n$. For cases where $m > n$, interior penalty terms are introduced to compensate for the lack of weak continuity. Owing to the design of the degrees of freedom, the bilinear form incorporates only the jumps of certain derivatives across edges, thereby avoiding the need for excessively large penalty parameters in the corresponding interior penalty scheme.

We further provide two types of error estimates: one assumes additional regularity, while the other relies only on minimal regularity. Due to the simplicity and flexibility of the proposed interior penalty nonconforming methods, the direct discretization of high-order partial differential equations becomes practical and efficient.

\section*{Acknowledgments}
Shuonan Wu was supported in part by the National Natural Science Foundation of China grant No. 12222101 and the Beijing Natural Science Foundation No. 1232007.

\bibliographystyle{amsplain}
\bibliography{IPmn-paper.bbl}

\end{document}
%% end of file %% 